\documentclass[11pt]{article}

\usepackage{amssymb}
\usepackage{amsmath}
\usepackage{graphicx}
\usepackage{tikz}
\usepackage{mathrsfs}
\usepackage{float}
\usepackage{makecell}
\usepackage{amscd}
\usepackage{multirow}
\usepackage{cite}
\usepackage{verbatim}
\usepackage{hhline}
\usepackage{comment}
\usepackage{amsthm}
\usepackage{fullpage}

\newcommand{\E}{\mathbb{E}}

\newcommand{\R}{\mathbb{R}}

\newcommand{\kk}{\mathbb{K}}

\newcommand{\Aut}{\text{Aut}}

\renewcommand{\epsilon}{\varepsilon}
\renewcommand{\diamond}{\diamondsuit}
\DeclareMathOperator{\lk}{lk}

\DeclareMathOperator{\fl}{flag}
\usepackage{relsize}
\newtheorem{theorem}{Theorem}[section]
\newtheorem{lemma}[theorem]{Lemma}

\newtheorem{claim}[theorem]{Claim}
\newtheorem{conjecture}[theorem]{Conjecture}
\newtheorem{definition}[theorem]{Definition}
\newtheorem{remark}[theorem]{Remark}
\title{One-sided sharp thresholds for homology of random flag complexes}
\author{Andrew Newman}
\begin{document}
\maketitle
\begin{abstract}
We prove that the random flag complex has a probability regime where the probability of nonvanishing homology is asymptotically bounded away from zero and away from one. Related to this main result we also establish new bounds on a sharp threshold for the fundamental group of a random flag complex to be a free group. In doing so we show that there is an intermediate probability regime in which the random flag complex has fundamental group which is neither free nor has Kazhdan's property (T).
\end{abstract}
\section{Introduction}
One of the most well known and important phase transitions in the Erd\H{o}s--R\'{e}nyi random graph model $G(n, p)$ is the phase transition for the giant component and emergence of cycles at $p = 1/n$ first proved in \cite{ER}. While sharper versions examining the behavior of random graphs in the critical window exist, to motivate the results here we state the Erd\H{o}s--R\'{e}nyi phase transition theorem as follows.
\begin{theorem}[(Erd\H{o}s--R\'{e}nyi \cite{ER})] \label{ERphase}
For $G \sim G(n, c/n)$ with $c \in (0, \infty)$ constant, one has the following:
\begin{enumerate}
\item If $c < 1$ then with high probability $G \sim G(n, c/n)$ has all connected components of order $O(\log n)$ and the probability that $G$ contains cycles is \[1 - \sqrt{1 - c}\exp(c/2 + c^2/4).\]
\item If $c > 1$ then with high probability $G \sim G(n, c/n)$ has a unique giant component of order $\Theta(n)$ and the rest of the components are of order $O(\log n)$ and with high probability $G$ has cycles. 
\end{enumerate}
\end{theorem}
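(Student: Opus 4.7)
The plan is to build both parts of Theorem \ref{ERphase} around the comparison between component exploration in $G(n, c/n)$ and a Galton--Watson branching process with Poisson$(c)$ offspring: this process is subcritical for $c < 1$ and supercritical for $c > 1$, and essentially every quantitative claim in the theorem descends from this dichotomy.

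For part (1), I would prove the two sub-claims separately. For the component size bound, I would perform a breadth-first exploration of $C(v)$, the component of a fixed vertex $v$: the number of newly discovered vertices at each step is stochastically dominated by $\Bin(n, c/n)$, so the exploration is dominated by a subcritical branching process whose total progeny has exponentially decaying tails. This yields $\Pr[|C(v)| > A\log n] = O(n^{-2})$ for $A = A(c)$ large enough, and a union bound over $v$ completes the claim. For the cycle probability, let $Z_k$ denote the number of $k$-cycles. A direct calculation gives $\E[Z_k] = \binom{n}{k}\frac{(k-1)!}{2}p^k \to c^k/(2k)$, and I would show by factorial-moment computations that $(Z_3, \ldots, Z_K)$ converges jointly to independent Poisson variables with parameters $c^k/(2k)$ for each fixed $K$. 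Combined with a tail bound $\E\sum_{k > K} Z_k \to 0$ as $K \to \infty$, this gives
\[
\Pr[G \text{ is acyclic}] \to \prod_{k=3}^{\infty} e^{-c^k/(2k)} = \exp\!\left(\tfrac{1}{2}\log(1-c) + \tfrac{c}{2} + \tfrac{c^2}{4}\right) = \sqrt{1-c}\,e^{c/2 + c^2/4},
\]
which rearranges to the stated cycle probability.

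For part (2), the branching process now has survival probability $\rho = \rho(c) \in (0,1)$. I would establish the unique giant by sprinkling: decompose $G(n, c/n) = G_1 \cup G_2$ with $G_1 \sim G(n, (c-\epsilon)/n)$ still supercritical and $G_2 \sim G(n, \epsilon'/n)$ for suitable $\epsilon, \epsilon' > 0$. A branching-process lower bound shows that $G_1$ already contains $\Theta(n)$ vertices in components of size at least $K\log n$ for some $K$, while the subcritical-style upper bound applied to the dual branching process (offspring conditioned on extinction) shows that all but $o(n)$ vertices lie in components of size $O(\log n)$. The sprinkled edges of $G_2$ connect all ``large'' components into a single giant of order $\rho(c)\,n\,(1+o(1))$ with high probability, and the $O(\log n)$ bound on the remaining components persists. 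Cycles in $G$ whp then follow from the fact that the giant component contains a nonempty $2$-core of size $\Theta(n)$ in the supercritical regime, and any nonempty $2$-core contains a cycle.

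The main obstacle is the joint Poisson convergence for cycle counts in part (1): passing from convergence of individual means $\E[Z_k] \to c^k/(2k)$ to convergence of $\Pr[\bigcap_k \{Z_k = 0\}]$ requires both verifying joint factorial-moment convergence at each fixed truncation $K$ and controlling the tail $\E\sum_{k > K} Z_k$ uniformly in $n$, a bound that degrades as $c \to 1$. A secondary difficulty is calibrating the sprinkling argument in part (2) to produce a \emph{unique} giant component of precisely the stated order rather than several large components, which typically requires an additional merging step comparing the large-component structure to a two-type branching process.
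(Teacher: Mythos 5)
This statement is quoted in the paper purely as classical background and attributed directly to Erd\H{o}s and R\'{e}nyi~\cite{ER}; the paper does not supply a proof of it, so there is no internal argument to compare against. Your sketch is a correct outline of the now-standard modern treatment. For part~(1) the branching-process domination of the breadth-first exploration and the exponential tail for subcritical total progeny do give $O(\log n)$ components after a union bound, and the cycle-count computation is right: with $\E[Z_k] \to c^k/(2k)$, joint Poisson convergence of $(Z_3,\dots,Z_K)$ by factorial moments plus the tail estimate $\E\sum_{k>K} Z_k \le \sum_{k>K} c^k/2 = c^{K+1}/(2(1-c)) \to 0$ yields $\Pr[\text{acyclic}] \to \exp\bigl(-\sum_{k\ge 3} c^k/(2k)\bigr) = \exp\bigl(\tfrac{1}{2}\log(1-c) + \tfrac{c}{2} + \tfrac{c^2}{4}\bigr)$, which is exactly the stated constant. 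For part~(2), the sprinkling argument and the duality trick (conditioning the supercritical branching process on extinction to control the small components) are the standard route to a unique giant of order $\Theta(n)$ with all other components $O(\log n)$, and the observation that the giant's nonempty $2$-core forces a cycle is fine. The two difficulties you single out --- joint factorial-moment convergence with a uniform tail, and calibrating the sprinkling to force uniqueness --- are indeed the real technical work, and your proposed handling of each is sound. One contextual remark: the original argument in~\cite{ER} is enumerative and does not use the branching-process comparison you employ; that comparison is a later reformulation (Karp and others), so your proof, while correct, is genuinely different in method from the cited source, trading explicit counting for stochastic domination and gaining conceptual clarity and easier tail control.
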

Because there are almost always cycles above the threshold but below threshold there is a positive probability that cycles are present and a positive probability they are not, the cycle threshold in $G(n, p)$ is described as a \emph{one-sided sharp threshold}. Here we study a higher-dimensional analogue of this one-sided sharp threshold for the emergence of cycles in the random clique complex model first introduced by Kahle \cite{KahleRandomClique}. For any graph $G$ the clique complex of $G$, also call the flag complex of $G$, is the simplicial complex $X$ whose faces are the cliques of $G$. In other words the clique complex of $G$ is the maximal simplicial complex $X$ so that the 1-skeleton of $X$, denoted $X^{(1)}$, is $G$. The random clique complex model $X(n, p)$ is sampled as the clique complex of the Erd\H{o}s--R\'{e}nyi random graph $G(n, p)$.

Cycles in $G(n, p)$ generalize to higher dimensions as nonvanishing homology groups in $X(n, p)$. A short argument using classic results  about inclusion of cliques in $G(n, p)$ can be used to show that $X \sim X(n, c/\sqrt[d]{n})$ for $c > \sqrt[d]{d + 1}$ with high probability has nonvanishing $d$th homology group (for any choice of coefficients). The details of such an argument can be found in \cite{KahleRational}. 

On the other hand, a result of \cite{KahleRandomClique} establishes that if $p = n^{-1/d + \epsilon}$ for any fixed $\epsilon > 0$, then with high probability $X \sim X(n, p)$ has vanishing $d$th homology group. As the main result here we improve on this lower bound by showing a probability regime where the probability of that the $d$th homology group vanishes is bounded away from zero and away from one. 

\begin{theorem}\label{maintheorem}
For $d \geq 2$, there exists an explicit constant $\epsilon_d$ so if $c < \epsilon_d$ then $\beta_d(X, \kk)$, for any coefficient field $\kk$, for $X \sim X(n, c/\sqrt[d]{n})$ is asymptotically Poisson distributed with mean 
\[\frac{c^{2(d + 1)d}}{2^{d + 1}(d + 1)!}.\]
\end{theorem}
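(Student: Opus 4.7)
The plan is to show that with high probability the $d$-th Betti number of $X \sim X(n, c/n^{1/d})$ equals the number $Y_n$ of induced subcomplexes of $X$ isomorphic to the boundary of the $(d+1)$-dimensional cross-polytope, and that $Y_n$ is asymptotically Poisson with the required mean. The cross-polytope boundary is a flag triangulation of $S^d$ on $2(d+1)$ vertices with exactly $\binom{2(d+1)}{2} - (d+1) = 2d(d+1)$ edges (the $d+1$ antipodal pairs are missing). Since there are $(2(d+1))!/(2^{d+1}(d+1)!)$ perfect matchings on $2(d+1)$ labelled points, writing $p = c/n^{1/d}$ gives
\[
\E[Y_n] \;=\; \binom{n}{2(d+1)} \cdot \frac{(2(d+1))!}{2^{d+1}(d+1)!} \cdot p^{\,2d(d+1)}(1-p)^{d+1} \;\longrightarrow\; \frac{c^{2d(d+1)}}{2^{d+1}(d+1)!} \;=:\; \lambda_d,
\]
which is the target mean.

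Poisson convergence of $Y_n$ itself would follow from the method of factorial moments: $\E[(Y_n)_r] \to \lambda_d^r$ because $r$-tuples of cross-polytopes sharing vertices but no shared edges contribute negligibly (vertex deficit with no compensating edge savings), while sharing an edge that is antipodal in one cross-polytope but not the other forces additional required edges whose expected contribution vanishes. To upgrade Poisson convergence of $Y_n$ to Poisson convergence of $\beta_d$, I would establish with high probability that (i) every induced cross-polytope contributes a nontrivial class to $H_d(X;\kk)$ that is not killed by any filling chain, since such a filling would require either an antipodal edge to be present (contradicting induced-ness) or an auxiliary vertex configuration whose expected count is $o(1)$ by a first moment bound; and (ii) distinct induced cross-polytopes have disjoint sets of $d$-faces, so the corresponding cycles are linearly independent in $H_d(X;\kk)$.

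The main obstacle is showing that no other nontrivial $d$-cycle appears. I would work with the notion of an inclusion-minimal induced subcomplex $Y \subseteq X$ which is itself a flag complex and satisfies $\widetilde H_d(Y;\kk) \neq 0$. The structural claim to prove is that every such $Y$ satisfies an appropriate edge-vertex inequality of the form $f_1(Y) \geq d \cdot f_0(Y)$, with equality only when $Y$ is a cross-polytope boundary; the heuristic reason is that minimality forces each vertex link to itself be a flag complex carrying a nontrivial $(d-1)$-cycle, inductively driving the edge count. Given this inequality, a union bound over isomorphism types splits into bounded $f_0(Y)$ (finitely many types, and every non-cross-polytope type is strictly above the threshold line and so has expected count $o(1)$ when $c < \epsilon_d$) and unbounded $f_0(Y)$ (ruled out by tail bounds on the existence of dense small subgraphs in $G(n, c/n^{1/d})$). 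The constant $\epsilon_d$ is precisely the largest $c$ for which this union bound remains summable, making it explicit. Combining this with the preceding steps yields $\beta_d(X;\kk) = Y_n$ with high probability and hence the asserted Poisson limit; the bulk of the technical work will go into the structural inequality and the quantitative union bound in the unbounded-$f_0$ regime.
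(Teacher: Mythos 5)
Your overall plan is in the right spirit and overlaps substantially with the paper's at the conceptual level: both identify the cross-polytope boundaries $\diamond_d$ as the Poisson-distributed structures, both hinge on a deterministic statement that any other homology-carrying subcomplex must have edge density strictly exceeding $d$, and both close with a first-moment union bound. Your expected-count computation and the factorial-moment route to the Poisson limit are fine and match the standard Bollob\'{a}s argument the paper invokes. However, there are two gaps that prevent the proposal from closing.

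First, the structural inequality you propose, $f_1(Y) \geq d\, f_0(Y)$ with equality only for $\diamond_d$, is not strong enough. For the union bound over isomorphism types to converge you need a quantitative gap: every non-$\diamond_d$ obstruction must have density at least $d + \delta$ for some fixed $\delta > 0$. Without such a gap, types with density $d + o(1)$ (e.g.\ $f_1 = d f_0 + 1$ with $f_0 \to \infty$) make the sum diverge; the first-moment bound on a $v$-vertex, $(d+\epsilon_v)v$-edge subgraph is roughly $(e c^{d} v^{2d-1} n^{-\epsilon_v/d})^v$, which does not tend to $0$ unless $\epsilon_v$ is bounded away from $0$. The paper proves exactly such a gap (Lemma~\ref{simplecores}: essential density $< d + \frac{1}{4+4d}$ implies almost $d$-collapsibility), and the crucial step beyond what you sketch is the observation that every degree-$2d$ vertex of a minimal obstruction must have a neighbor of strictly larger degree (Claim~\ref{claim:bigneighbor}); this is what pushes the density past $d$ by a uniform amount. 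Your heuristic about links carrying $(d-1)$-cycles gives minimum degree $2d$, hence density $\geq d$, but does not by itself yield the gap.

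Second, even with the gap in hand, the first-moment bound only eliminates obstructions on $O(\log n)$ vertices; for subcomplexes with $f_0$ polynomial in $n$ the bound $(e c^{d+\delta} v^{2d+2\delta-1} n^{-\delta/d})^v$ blows up. Your proposal waves this away with ``tail bounds on dense small subgraphs,'' but no such tail bound is available for large $v$. The paper handles this by first decomposing $X^{(d)}$ into strongly connected components (Lemmas~\ref{partitionlemma}, \ref{partitionlemma2}), observing that collapsibility can be checked component by component, and then proving a separate enumeration result (Lemma~\ref{largecorelemma}) bounding the number of $c$-bounded strongly connected $d$-complexes on $r$ edges by $2^{d+1} n^{(d+1)/2} (2^{1+2d} d \sqrt[d]{n})^r$. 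This counting argument, modeled on Aronshtam--Linial, is what makes the case of large components tractable, and nothing in your sketch replaces it. Relatedly, the paper works with $d$-collapsibility rather than directly with ``minimal subcomplex carrying $H_d$'' precisely because collapsibility interacts cleanly with the strongly-connected decomposition and automatically delivers the face-disjointness of the surviving $\diamond_d$'s; reasoning directly about inclusion-minimal homology-carrying flag subcomplexes raises thorny questions (cycles whose support is not flag, relations among cycles in overlapping subcomplexes) that your proposal does not address.
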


The Poisson distribution comes from counting particular subcomplexes in $X \sim X(n, c/\sqrt[d]{n})$. The boundary of the $(d + 1)$-dimensional cross-polytope is the clique complex of the complete $(d + 1)$-partite graph with two vertices in each part, and standard results about random graphs tell us that $X \sim X(n, c/\sqrt[d]{n})$ for $c$ constant will contain Poisson distributed copies of the boundary of the $(d + 1)$-dimensional cross-polytope. For $c$ a small enough constant we show that these cross-polytope boundaries are the \emph{only} subcomplexes of $X(n, c/\sqrt[d]{n})$ that can carry homology in degree $d$.

In fact we actually prove something stronger. The proof of Theorem \ref{maintheorem} builds on a result of Malen \cite{Malen} about $d$-collapsibility of random clique complexes. We show that in the studied regime $X \sim X(n, c/\sqrt[d]{n})$ asymptotically almost surely can be collapsed to a complex whose pure $d$-dimensional part is a face-disjoint union of $(d + 1)$-cross-polytope boundaries. We explain this in more detail in Section \ref{sec:background}. 

Because we show a collapsibility result, the proof of Theorem \ref{maintheorem} implies that for $c$ sufficiently small $X \sim X(n, c/\sqrt{n})$ asymptotically almost surely has free fundamental group. This improves on a result of Costa, Farber, and Horak \cite{CostaFarberHorak} that $X(n, n^{-\alpha})$ has free fundamental group if $\alpha > 1/2$, but nonfree fundamental group if $1/3 < \alpha < 1/2$. Here the assumption of $\alpha > 1/3$ comes from the threshold for simply connectivity of $X(n, p)$, due to \cite{CostaFarberHorak} and \cite{KahleRandomClique}. In Section \ref{sec:freegroup} we adapt some of the methods for the proof of Theorem \ref{maintheorem} combined with results of \cite{CostaFarberHorak} to also improve on the upper bound for the threshold for the fundamental group to be nonfree. Our result regarding the fundamental group is the following theorem:

\begin{theorem}\label{pi1theorem}
For $c < \frac{1}{64}$ and $p < c/\sqrt{n}$, asymptotically almost surely $X \sim X(n, p)$ has that $\pi_1(X)$ is free. For $C > \sqrt{3}$, $\alpha > 1/3$, and $C/\sqrt{n} < p < n^{-\alpha}$, asymptotically almost surely $X \sim X(n, p)$ has that $\pi_1(X)$ is not free.
\end{theorem}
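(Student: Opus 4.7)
The theorem has two parts and my plan is to treat them separately. For the freeness claim $(c < 1/64)$, the plan is to invoke the collapsibility result established in the proof of Theorem \ref{maintheorem} specialized to $d = 2$: for $c$ below the relevant explicit threshold, the random complex $X \sim X(n, c/\sqrt{n})$ a.a.s.\ admits a sequence of elementary collapses down to a 2-complex $Y$ whose pure 2-dimensional part is a face-disjoint union of octahedron boundaries. Since each such boundary is homeomorphic to $S^{2}$ and therefore simply-connected, the goal is to upgrade the collapse to a homotopy equivalence $Y \simeq G \vee \bigvee_{i} S^{2}$ for some graph $G$, which immediately yields freeness of $\pi_{1}(X) = \pi_{1}(Y)$. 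The work is in showing that each octahedron in the pure 2-part of $Y$ attaches to the rest of $Y$ either at a single vertex or along a contractible subgraph. For $c$ small enough, the probabilistic bounds rule out pairs of octahedra sharing an edge, or any configuration in which an octahedron meets the 1-skeleton of $Y$ non-trivially. I expect the explicit threshold $1/64$ to emerge from the tightest such first- and second-moment bound on these ``bad'' configurations, rather than from the appearance threshold of octahedra themselves.

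For the non-freeness claim $(C > \sqrt{3})$, the plan is to quantitatively refine the argument of \cite{CostaFarberHorak}, which already produces non-free $\pi_{1}$ in a regime of the form $n^{-1/2} \ll p < n^{-1/3}$, and to push its lower end down to $p = C/\sqrt{n}$ with an explicit constant $C$. The value $\sqrt{3}$ is strongly suggested by a direct expectation count: at $p = C/\sqrt{n}$ the expected numbers of edges and triangles in $X(n,p)$ are asymptotic to $C n^{3/2}/2$ and $C^{3} n^{3/2}/6$ respectively, and the latter first exceeds the former exactly when $C > \sqrt{3}$. This is the natural break-point at which enough 2-cells appear to obstruct a free presentation in the spirit of CFH. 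The plan is then to extract CFH's topological obstruction subcomplex and, using the Poisson-counting and deletion-method estimates developed in the proof of Theorem \ref{maintheorem}, show that this obstruction is realised a.a.s.\ in $X(n, C/\sqrt{n})$ for any $C > \sqrt{3}$ and continues to exist when we pass from $p = C/\sqrt{n}$ up to $p = n^{-\alpha}$ with $\alpha > 1/3$.

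The main obstacle I anticipate is the non-free direction. At $p = C/\sqrt{n}$ with $C$ only slightly above $\sqrt{3}$, any configuration of two or more octahedra sharing vertices or edges is strictly subcritical, because the relevant density ratio $v/e$ exceeds $1/2$ for such configurations; so the obstruction must come either from a single octahedron sitting in a sufficiently rich local 1-skeleton, or from an essentially global counting argument against free presentations. The hard step will be adapting CFH's local obstruction, which in their setting is established in a denser regime, so that it survives at a density right at the octahedron-appearance threshold without being drowned out by other triangles; the constraint $\alpha > 1/3$ in the statement presumably reflects exactly the point at which this control over interfering 2-cells breaks down.
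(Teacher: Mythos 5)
Your plan for the subcritical ($c < 1/64$) half is on the right track in invoking the $d=2$ case of the almost-collapsibility theorem, but it overcomplicates the final step and in doing so misses the clean argument and misidentifies where $1/64$ comes from. The paper's Lemma~\ref{almost2collapsible} is much simpler than a wedge decomposition: after collapsing to a 2-complex $Y$ whose pure 2-dimensional part is a face-disjoint union of copies of $\diamond_2$, one simply removes \emph{one triangle} from each copy of $\diamond_2$. This does not change $\pi_1$, because the boundary of the removed triangle is already nullhomotopic inside the remaining seven triangles of that $\diamond_2$. The resulting complex then 2-collapses to a graph, so $\pi_1$ is free. No control on how the octahedra attach to the rest of $Y$ (single vertex, contractible subgraph, etc.) is needed; indeed trying to prove $Y \simeq G \vee \bigvee S^2$ would require ruling out configurations (e.g.\ an octahedron sharing edges with the ambient 1-skeleton) that are not actually rare enough to discard and are not actually obstructions. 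Relatedly, $1/64$ is just $\epsilon_2 = 1/(2^{2d+1}d)$ at $d=2$ from Theorem~\ref{realmaintheorem}; it is not tuned to attachment-type bad events.

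Your plan for the supercritical half correctly traces $\sqrt{3}$ to the crossover point where the expected number of triangles exceeds the expected number of edges, but the argument as proposed has a genuine gap: having many 2-cells, or even $\beta_2 > 0$, does not by itself obstruct freeness of $\pi_1$ (consider $S^2 \vee S^1$). The missing ingredient is an \emph{asphericity} statement. The paper proves a new result (Theorem~\ref{almostaspherical}) that for $p < n^{-\alpha}$ with $\alpha > 12/25$, the complex $\widetilde{X}$ obtained from $X$ by deleting one triangle from each $\diamond_2$ is aspherical, hence a $K(\pi_1(X),1)$. Only then does the Euler-characteristic/$\beta_2$ count bite: if $\pi_1(X)$ were free, $\widetilde{X}$ would be homotopy equivalent to a bouquet of circles by uniqueness of $K(G,1)$, contradicting $\beta_2(\widetilde{X}) \geq (c^3/6 - c/2 - o(1))n^{3/2} > 0$ when $c^2 > 3$. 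Proving that asphericity (Lemma~\ref{Zlemma} and Lemma~\ref{additionalproperties}) is the bulk of the work and does not appear in your outline. Finally, the regime $n^{-\alpha'} \leq p < n^{-\alpha}$ with $12/25 < \alpha' < 1/2$ is handled separately: there $p$ is large enough that Kahle's property~(T) result applies, and a nontrivial group with property~(T) cannot be free; your plan does not address this bridge, and the role of $\alpha > 1/3$ is to guarantee $\pi_1(X)$ is nontrivial (below $1/3$ the complex is simply connected), not a breakdown of control over interfering 2-cells.
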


To establish the upper bound in Theorem~\ref{pi1theorem} we study ``almost asphericity" of random flag complexes. This is a natural analogue of a notion previously studied in the Linial--Meshulam model by Costa and Farber \cite{CF}. Theorem~\ref{pi1theorem} together with a result of Kahle \cite{KahleRational} implies the following theorem which settles a conjecture of Costa, Farber, and Horak that there is an intermediate regime where $\pi_1(X)$ for $X \sim X(n, p)$ is not free and does not have Kazhdan's property~(T).

\begin{theorem}\label{intermediateregime}
For any fixed $\epsilon > 0$ if
\[\left( \frac{3 + \epsilon}{n} \right)^{1/2} < p < \left( \frac{(3/2 - \epsilon) \log n}{n} \right)^{1/2},\]
then with high probability $X \sim X(n, p)$ has that $\pi_1(X)$ is not free and does not have property~(T).
\end{theorem}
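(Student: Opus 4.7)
The plan is to deduce Theorem \ref{intermediateregime} by combining the upper bound from Theorem \ref{pi1theorem} with Kahle's sharp vanishing threshold for rational first homology of random flag complexes \cite{KahleRational}. The nonfreeness of $\pi_1(X)$ will come directly from Theorem \ref{pi1theorem}, whereas the failure of Kazhdan's property~(T) will follow because property~(T) forces the abelianization $H_1(X;\Z) \cong \pi_1(X)^{\mathrm{ab}}$ to be finite, so $H_1(X;\Q) = 0$, which Kahle's result rules out in the specified range.

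First I would verify the hypotheses of Theorem \ref{pi1theorem} throughout the intermediate regime. Writing $p = C(n)/\sqrt{n}$, the lower bound $p > \sqrt{(3 + \epsilon)/n}$ gives $C(n) \geq \sqrt{3 + \epsilon} > \sqrt{3}$ uniformly in $n$, and the upper bound $p < \sqrt{(3/2 - \epsilon)\log n / n}$ ensures $p = o(n^{-\alpha})$ for every fixed $\alpha < 1/2$, so one can fix some $\alpha \in (1/3, 1/2)$ and apply the second half of Theorem \ref{pi1theorem} to conclude that $\pi_1(X)$ is a.a.s.\ not free.

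Next, for the failure of property~(T), I would invoke Kahle's theorem from \cite{KahleRational}, which places the sharp threshold for the vanishing of $H_1(X(n,p); \Q)$ at $p = \sqrt{(3/2)\log n / n}$ and shows that strictly below this threshold one has $H_1(X;\Q) \neq 0$ a.a.s. If $\pi_1(X)$ had property~(T), then its abelianization $H_1(X;\Z)$ would be a finite abelian group and $H_1(X;\Q)$ would therefore vanish, contradicting Kahle's result. Intersecting this a.a.s.\ event with the event from the previous paragraph gives the theorem. There is no serious obstacle here: the intervals appearing in the hypothesis are calibrated precisely so that Theorem \ref{pi1theorem} (via its supercritical lower bound $C > \sqrt{3}$) and Kahle's vanishing threshold (via its subcritical upper bound $\sqrt{(3/2)\log n/n}$) apply simultaneously, and the argument is essentially just packaging.
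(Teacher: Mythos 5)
Your proof is correct and follows essentially the same route as the paper: non-freeness comes from the supercritical half of Theorem~\ref{pi1theorem} (noting $\sqrt{3+\epsilon}>\sqrt{3}$ and $p<n^{-\alpha}$ for some $\alpha\in(1/3,1/2)$), and failure of property~(T) comes from Kahle's results in \cite{KahleRational}. The only superficial difference is that you re-derive the non-(T) conclusion via Kahle's sharp threshold for $H_1(X;\Q)$ plus the fact that property~(T) forces finite abelianization, whereas the paper simply invokes Theorem~\ref{propertyTtheorem} (Kahle's Theorem~1.2) which already packages the non-(T) statement directly for $p\le\bigl((3/2-\epsilon)\log n/n\bigr)^{1/2}$.
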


We then close with some discussion about what the true sharp thresholds for the discussed properties might be.

\section{Background}\label{sec:background}
A series of four papers \cite{ALLM, AL2, AL, LP} study the generalization of the Erd\H{o}s--R\'{e}nyi phase transition to the Linial--Meshulam model. Recall that for $d \geq 1$ fixed, one samples $Y \sim Y_d(n, p)$ in the Linial--Meshulam model by starting with the complete $(d - 1)$-complex on $n$ vertices and including each $d$-dimensional face independently with probability $p$; the $d = 1$ case is then exactly the Erd\H{o}s--R\'{e}nyi random graph model. For $d \geq 2$, there are (at least) two ways to generalize the existence of cycles in a graph to an analogous property in a $d$-complex: non-$d$-collapsibility and the nonvanishing of the $d$th homology group. 

A face in a simplicial complex is said to be \emph{free} if it is properly contained in only one other face. The removal of a free face and its unique coface is called an \emph{elementary collapse}, and a complex is $d$-collapsible provided that there exists a sequence of elementary collapses which eliminates all faces of dimension at least $d$. For a graph, a free vertex is simply a leaf and so a graph is acyclic if and only if it is 1-collapsible. On the other hand an elementary collapse is a homotopy equivalence, so a $d$-collapsible complex will have no homology in degree $d$ or larger, however it is well known that the reverse implication holds only for $d = 1$. 

For $Y_d(n, p)$, $d$-collapsibility and nonvanishing of the $d$th homology group have different thresholds, the former is established by \cite{ALLM} and \cite{AL2} and the latter by \cite{AL} and \cite{LP}. For each value of $d$ both thresholds are in the regime $p = c/n$. The critical constants $\gamma_d$ and $c_d$ for each thresholds are given explicitly by solutions to certain transcendental equations with $\gamma_d$ being of order $\log d$ for $d \rightarrow \infty$ and $c_d$ being asymptotically very slightly smaller than $d + 1$. 

Both of these thresholds in $Y_d(n, p)$ are one-sided sharp thresholds. For $p = c/n$, one can show that the expected number of copies of the boundary of the $(d + 1)$-simplex, $\partial \Delta_{d + 1}$, in $Y_d(n, c/n)$ is Poisson distributed with mean $c^{d + 2}/(d + 2)!$. Since $Y_d(n, p)$ is always $d$-dimensional, a $(d + 1)$-simplex boundary is always a nontrivial homology class and an obstruction to $d$-collapsiblity. Thus the results of \cite{ALLM, AL2, AL, LP} can be summarized as follows: 
\begin{itemize}
\item For $0 < c < \gamma_d$, $Y \sim Y_d(n, c/n)$ is $d$-collapsible with probability asymptotic to $\exp(-c^{d + 2}/(d + 2)!)$ \cite{ALLM}.
\item For $\gamma_d < c < c_d$, $Y \sim Y_d(n, c/n)$ is not $d$-collapsible with high probability \cite{AL2}, but the $d$th homology group with real coefficients is generated by embedded copies of $\partial \Delta_{d + 1}$, in particular $\beta_d(Y; \R)$ is asymptotically Poisson distributed with mean $c^{d + 2}/(d + 2)!$ \cite{LP}.
\item For $c_d < c$, $Y \sim Y_d(n, c/n)$ has nonvanishing $d$th homology group for any choice of coefficients with high probablitiy \cite{AL}. 
\end{itemize}

The key to the arguments which establish both thresholds is the local behavior of $Y \sim Y_d(n, p)$ at a $(d - 1)$-dimensional face and the possibility of large cores, complexes which are obstructions to $d$-collapsibility. It seems plausible that the ``local behavior at a $(d - 1)$-dimensional face" part of the arguments for $d$-collapsibility and vanishing of $d$th homology could be adapted to work in the random flag complex setting. The larger challenge though is to adapt an argument counting $d$-dimensional cores in $X \sim X(n, p)$. The Linial--Meshulam model $Y_d(n, p)$ produces complexes that are $d$-dimensional and $d$-complexes are easier to $d$-collapse than complexes of larger dimension. 

A $d$-dimensional complex is said to be a \emph{core} provided that each $(d - 1)$-dimensional face is contained in at least two $d$-dimensional faces, so a $d$-complex is $d$-collapsible if and only if it does not contain a $d$-dimensional core. This means that a greedy approach to elementary collapses will tell us whether or not a given $d$-complex is $d$-collapsible. For $d$-collapsibility of a given $k$-complex with $k > d$ the situation becomes more complicated. For example any simplex is collapsible in the usual sense to a vertex, so in particular it is $d$-collapsible for any $d$, however if the collapses are chosen in the wrong way it's possible to get stuck. An explicit example of this phenomenon is shown \cite{benedetti2009dunce} where Benedetti and Lutz show a way to collapse the 7-simplex to a triangulation of the dunce hat on 8 vertices, which is not 2-collapsible. In Section \ref{sec:overview} we describe how we work around this potential possibility of getting stuck to prove Theorem \ref{maintheorem}. 

A paper of Malen \cite{Malen} studies $d$-collapsibility in $X(n, p)$. His main result shows that for $\alpha > 1/d$, with high probability $X \sim X(n, n^{-\alpha})$ is $d$-collapsible. This result was applied recently by Dochtermann and the present author \cite{DochtermannNewman} to establish results on random coedge ideals. Some of the background for that article motivates the questions considered here, as the $p = n^{1/d}$ regime for $d$ an integer are related to boundary cases for theorems proved in \cite{DochtermannNewman}. In \cite{DochtermannNewman} we consider a conjecture of Erman and Yang \cite{ErmanYang} about normal distribution of Betti numbers of random coedge ideals. Random coedge ideals correspond in a natural way to random flag complexes and in \cite{ErmanYang} Erman and Yang prove a normal distribution of the first row of the Betti table of the random coedge ideal for $G \sim G(n, c/n)$ and $c < 1$, i.e. for $p$ close to, but below, the Erd\H{o}s--R\'{e}nyi phase transition. A challenge to generalizing their conjecture to other rows was that there was not an established one-sided sharp phase transition for homology in dimensions larger than one for $X \sim X(n, p)$. 

As discussed in the introduction, the proof of Theorem \ref{maintheorem} proceeds by proving something stronger related to $d$-collapsibility. Toward formally stating our $d$-collapsibility result, we recall that the pure $d$-dimensional part of a simplicial complex $X$ is the subcomplex of $X$ whose facets are the $d$-dimensional faces of $X$, and we say that a simplicial complex $X$ is \emph{almost $d$-collapsible} if there is a sequence of elementary collapses from $X$ to a complex $Y$ so that $Y$ is at most $d$-dimensional and the pure $d$-dimensional part of $Y$ is a face disjoint union of copies of the $(d + 1)$-dimensional cross-polytope boundary. For convenience, we let $\diamond_d$ denote the $(d + 1)$-dimensional cross-polytope boundary. Because we are dealing with flag complexes throughout, we typically won't distinguish between $\diamond_d$ as a $d$-complex and its 1-skeleton as a graph. The stronger version of Theorem \ref{maintheorem} that we prove is the following.
\begin{theorem}\label{realmaintheorem}
For $c < \frac{1}{2^{2d + 1}d}$ with high probability $X \sim X(n, c/\sqrt[d]{n})$ is almost $d$-collapsible. 
\end{theorem}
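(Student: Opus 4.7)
The plan is to attack Theorem \ref{realmaintheorem} in two phases: first collapse all faces of dimension strictly greater than $d$ to reduce $X$ to a pure $d$-dimensional complex $Y$, then analyze $Y$ by identifying its maximal $d$-core. For Phase 1 I would adapt Malen's argument to our regime: since $p = c/\sqrt[d]{n}$ satisfies $\alpha = 1/d > 1/(d+1)$, a variant of Malen's theorem at level $d+1$ guarantees $(d+1)$-collapsibility with high probability, and the collapses can be organized, by always working with free faces of the highest currently available dimension, so as not to get stuck in the Benedetti--Lutz sense.

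For Phase 2 I would greedily collapse free $(d-1)$-faces of $Y$: at each step find a $(d-1)$-face contained in exactly one remaining $d$-face and remove both. For pure $d$-complexes this greedy procedure is order-independent and terminates with the unique maximal $d$-core of $Y$, the largest pure $d$-subcomplex in which every $(d-1)$-face meets at least two $d$-faces. Thus almost $d$-collapsibility reduces to showing that, with high probability, this maximal core is a face-disjoint union of cross-polytope boundaries $\diamond_d$.

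To control the core I would run a first-moment argument. Any subcomplex carrying a piece of the core sits inside some subgraph $H$ of the underlying $G \sim G(n, c/\sqrt[d]{n})$, and the expected number of copies is at most $n^{v(H)} p^{e(H)} = c^{e(H)} n^{v(H) - e(H)/d}$. The cross-polytope graph $K_{2,\ldots,2}$ on $v = 2(d+1)$ vertices with $e = 2d(d+1)$ edges hits the critical ratio $e/v = d$ exactly, giving the $\Theta(1)$ expected count that lets disjoint cross-polytopes persist; face-disjointness among the surviving copies follows from standard Poisson approximation for subgraph counts. The main combinatorial claim I need, and the main technical obstacle, is that every other connected flag $d$-core strictly exceeds this ratio with enough uniform slack to sum over isomorphism types.

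I would prove this claim by exploring a core face-by-face: starting from a single $d$-face, the core condition forces each of its $d+1$ many $(d-1)$-subfaces to be contained in at least one further $d$-face, and unless these extensions close up immediately into the cross-polytope pattern they introduce strictly more edges than $d$ times the new vertices. Iterating yields a tree-like enumeration in which the branching is uniformly bounded while each added vertex contributes a fixed power of $c$, producing a geometric-type sum that converges precisely when $c < 1/(2^{2d+1} d)$. Combined with the Poisson count for isolated cross-polytopes, this yields the theorem.
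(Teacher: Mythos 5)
Your high-level intuitions — collapse down to dimension $d$, control the $d$-core by a density argument, and bound the number of candidate cores via an exploration that yields a geometric sum — are all genuinely present in the paper's proof. But the two-phase decomposition you propose has a real gap at its hinge. You invoke Malen to collapse $X$ to a complex $Y$ of dimension at most $d$, and then take the (unique) $d$-core of $Y$. The problem is that $Y$ is not canonical: different collapsing sequences of the high-dimensional faces delete different $d$-faces, and hence produce different $d$-complexes with different $d$-cores. Your claim that "working with free faces of the highest currently available dimension" avoids getting stuck is unjustified, and more importantly it does not address the real issue, which is not getting stuck per se but ensuring that the particular $d$-complex you land on has a core that is a face-disjoint union of $\diamond_d$'s. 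This is exactly the technical difficulty the paper flags (the Benedetti--Lutz phenomenon) and then resolves by a different decomposition: Malen's Lemma~3.3 shows that the strongly connected components of $X^{(d)}$ partition the faces of $X$ of dimension at least $d$, and that their flag closures interact only along maximal $(d-1)$-faces, which are irrelevant to almost $d$-collapsibility. This lets the paper collapse each component independently, completely avoiding the need to reason about a single global collapsing order or a canonical "Phase 1 output."

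There is a second, smaller imprecision in your density claim. You assert that "every other connected flag $d$-core strictly exceeds" density $d$ with uniform slack, but the right statement is more delicate: the paper's Lemma~\ref{simplecores} says that if a graph $H$ has essential density below $d + \tfrac{1}{4+4d}$ then its flag closure $\overline{H}$ is \emph{almost} $d$-collapsible, and the proof works by a minimum-degree and degree-sum argument on a putative minimal counterexample, not by classifying cores outright. Your "exploration" argument does correctly locate where the constant $\tfrac{1}{2^{2d+1}d}$ should come from — it is the analogue of the enumeration in Lemma~\ref{largecorelemma}, which builds strongly connected $d$-complexes face by face with bounded branching — but in the paper this enumeration is used only to show components are of logarithmic size, with a separate, simpler first-moment bound then controlling the density of all small subgraphs. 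Untangling the enumeration from the density argument, and routing both through the strongly connected component decomposition, is what makes the proof close; as written, your proposal is missing the decomposition entirely.
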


If a complex is almost $d$-collapsible then its $d$th homology group is the free abelian group with one generator for each remaining copy of $\diamond_d$ after the collapsing sequence erases everything else of dimension $d$ and higher. So then the Poisson distribution part of Theorem \ref{maintheorem} comes from enumeration of copies of $\diamond_d$ that are not collapsed away by collapsing $X$. This part comes from classic random graph theory.

Recall that the \emph{essential density of a graph $H$} is 
\[\rho(H) = \max\{e(H')/v(H') \mid H' \subseteq H\},\]
and that a graph is strictly balanced provided the essential density is attained by the graph itself and not by any proper subgraphs. A classic result of Bollob\'{a}s \cite{BollobasDensity} shows that for any finite strictly balanced graph $H$, the number of copies of $H$ embedded in $G(n, c/n^{1/\rho(H)})$ for $c$ fixed is asymptotically Poisson distributed with mean:
\[\frac{c^{e(H)}}{|\Aut(H)|}.\]
For more background on embedding subgraphs into random graphs see for example \cite[Chapter~5]{FriezeRandomGraphs}. 

It is straightforward to check that $\diamond_d$ is strictly balanced with essential density $d$. The number of automorphism of $\diamond_d$ is $(d + 1)!2^{d + 1}$; this can be easily seen by counting the number of automorphisms of the complement of $\diamond_d$ which is a matching on $2(d + 1)$ vertices. Therefore the number of copies of $\diamond_d$ embedded in $X(n, c/\sqrt[d]{n})$ is asymptotically distributed as a Poisson random variable with mean 
\[\frac{c^{2d(d + 1)}}{2^{d + 1}(d + 1)!}.\]

We have to be a bit careful though as not every copy of $\diamond_d$ necessarily survives the collapsing process. For example a non-induced copy of $\diamond_d$ could potentially be collapsed away, however in proving Theorem \ref{maintheorem} from Theorem \ref{realmaintheorem} we show that copies of $\diamond_d$ that do not contribute to $d$th homology are negligible in the limit.

\begin{remark}
Theorem \ref{realmaintheorem} and a simple first moment argument enumerating copies of $\diamond_d$ in an Erd\H{o}s--R\'{e}nyi random graph imply that if $p = o(n^{-1/d})$ then $X \sim X(n, p)$ is asymptotically almost surely $d$-collapsible. This strengthens the result of Malen \cite{Malen} that $\alpha > 1/d$ implies that $X \sim X(n, n^{-\alpha})$ is asymptotically almost surely $d$-collapsible and an earlier result of Kahle \cite{KahleRandomClique} that $\alpha > 1/d$ implies $X \sim(n, n^{-\alpha})$ asymptotically almost surely has no homology above dimension $(d - 1)$. 
\end{remark}

\section{Overview of the proof}\label{sec:overview}
The approach to proving Theorem \ref{realmaintheorem} is to count obstructions to $d$-collapsibility. Typically a $d$-dimensional core is defined to be a $d$-dimensional complex so that every $(d - 1)$-dimensional face belongs to at least two $d$-dimensional faces. Here though we drop the ``$d$-dimensional complex" assumption. We say that a simplicial complex $Z$ is a \emph{$d$-core} provided that every $(d - 1)$-dimensional face of $Z$ belongs to at least two $d$-dimensional faces. Note that usage of ``$d$-core" here differs from the definition readers may be familiar with from hypergraph literature.

Naively we might try to show that the only $d$-cores are the embedded copies of $\diamond_d$ and that this implies that the complex collapses so that the only $d$-dimensional faces left are those in copies of $\diamond_d$. This will not work however because, unlike $Y_d(n, c/n)$, $X(n, c/\sqrt[d]{n})$ in general has dimension larger than $d$. In particular for $d \geq 2$ the expected number of $(d + 1)$-dimensional faces in $X(n, c/\sqrt[d]{n})$ is 
\[\binom{n}{d + 2} \left(\frac{c}{\sqrt[d]{n}} \right)^{\binom{d + 2}{2}} \approx \frac{c}{(d + 2)!} n^{(d + 2)(1 - \frac{d + 1}{2d})} \rightarrow \infty.\]
And the boundary of a $(d + 1)$-simplex is a $d$-core.

To describe precisely how we will collapse $X(n, c/\sqrt[d]{n})$ we first introduce some terminology and notation that is fairly standard. For a $d$-complex $Y$, the \emph{dual graph of $Y$}, denoted $\mathcal{G}(Y)$, is defined to be the graph whose vertices are the $d$-dimensional faces of $Y$ with edges between two vertices if and only if the corresponding faces meet at a $(d - 1)$-dimensional face. A \emph{$d$-dimensional strongly connected complex} is a $d$-complex $Y$ so that $\mathcal{G}(Y)$ is connected. We will collapse $X \sim X(n, c/\sqrt[d]{n})$ by collapsing each of it's $d$-dimensional strongly connected components separately, and showing that these collapsing moves may be applied in a coherent way to collapse all the faces of dimension $d$ from $X$. This is similar to the approach taken in \cite{Malen}. The first step is a slight strengthening of Lemma 3.3 of \cite{Malen}, given here with proof as Lemma \ref{partitionlemma}.

In order to state this lemma we introduce a definition and some notation.
\begin{definition}
For a simplicial complex $Y$ the \emph{$k$-flag closure of $Y$} denoted $\fl_k(Y)$ is the maximal simplicial complex whose $k$-skeleton is the same as the $k$-skelelton of $Y$. In otherwords $\fl_k(Y)$ is the simplicial complex obtained from $Y$ by adding the requirement that the simplex $\sigma$ is included if its $k$-skeleton belongs to $Y$. We say that a complex $Y$ is a \emph{$k$-flag complex} if $Y = \fl_k(Y)$. Outside of this section and Section \ref{sec:smallcores} we will most often deal with the usual flag closure $\fl_1(Y)$, so we will use the notation $\overline{Y}$ for $\fl_1(Y)$.
\end{definition}


\begin{lemma}\label{partitionlemma}
Let $X$ be a clique complex and $S_1$ and $S_2$ be strongly connected components of $X^{(d)}$. Then $\dim(\fl_{d - 1}(S_1) \cap \fl_{d - 1}(S_2)) \leq (d - 1)$ and if $\sigma \in \fl_{d - 1}(S_1) \cap \fl_{d - 1}(S_2)$ is $(d - 1)$-dimensional then $\sigma$ is maximal in at least one of $\fl_{d - 1}(S_1)$ and $\fl_{d - 1}(S_2)$.
\end{lemma}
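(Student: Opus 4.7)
The plan is to prove the dimension bound first and then derive the maximality claim from it via a short dual-graph adjacency argument.

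Since $\overline{S_i}$ is the flag complex on $S_i^{(1)}$, I would first identify it with the complex of all cliques of $S_i^{(1)}$; consequently $\overline{S_1} \cap \overline{S_2}$ is the flag complex on $S_1^{(1)} \cap S_2^{(1)}$, and a $k$-face lies in the intersection iff each of its edges lies in both $S_i^{(1)}$. A key preparatory observation is that any two $d$-cofaces of the same $(d-1)$-face of $X^{(d)}$ share that $(d-1)$-face and are therefore adjacent in $\mathcal{G}(X^{(d)})$, so all $d$-cofaces of any fixed $(d-1)$-face of $X^{(d)}$ lie in a single strongly connected component.

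For the dimension bound, I suppose toward contradiction that $\tau = \{v_0, \dots, v_d\}$ is a $d$-face of $\overline{S_1} \cap \overline{S_2}$, so every edge of $\tau$ lies in both $S_i^{(1)}$. Since $X$ is flag and $\tau$ is a $(d+1)$-clique in $X^{(1)}$, $\tau$ is a $d$-face of $X^{(d)}$ and therefore belongs to a unique strongly connected component; without loss of generality $\tau \in S_1$. For each edge $e \subset \tau$ fix a $d$-face $\tau_e \in S_2$ containing $e$. The goal is to build a path from $\tau$ to some $\tau_e$ in $\mathcal{G}(X^{(d)})$, which would place $\tau$ in $S_2$'s component and contradict $\tau \in S_1$ together with $S_1 \neq S_2$. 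I intend to construct such a path by a sequence of $d$-faces produced by swapping a single vertex at a time, so that consecutive faces automatically share a $(d-1)$-face. The key check is that each intermediate $(d+1)$-set is a clique in $X^{(1)}$; once this holds, the flag property of $X$ places the set in $X^{(d)}$. The certification uses the edges known to lie in $S_1^{(1)} \cup S_2^{(1)}$, namely the edges of $\tau$ together with the edges of the chosen $\tau_e$'s.

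For the maximality claim, I take $\sigma$ a $(d-1)$-face of $\overline{S_1} \cap \overline{S_2}$ with a $d$-coface $\tau_i$ in each $\overline{S_i}$. The dimension bound forbids $\tau_1 = \tau_2$, else a $d$-face would lie in $\overline{S_1} \cap \overline{S_2}$. Moreover, each $\tau_i$ is a $d$-face of $X$ by flag, so lies in some strongly connected component $S_{k_i}$; if $k_i \neq i$ then $\tau_i \in \overline{S_i} \cap \overline{S_{k_i}}$ would be a $d$-face, again contradicting the dimension bound, so $\tau_i \in S_i$. The shared $\sigma$ then makes $\tau_1$ and $\tau_2$ adjacent in $\mathcal{G}(X^{(d)})$, placing them in a common component and contradicting $S_1 \neq S_2$. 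The main obstacle is the dimension-bound path construction: one must choose a swap order, possibly drawing on several $\tau_e$'s at once, so that every intermediate $(d+1)$-set is verifiably a clique in $X^{(1)}$ from edges already known to be present in $S_1^{(1)} \cup S_2^{(1)}$.
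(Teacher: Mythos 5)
The paper does not prove this lemma---it is cited directly from Malen---so you are on your own here, and I am assessing correctness rather than comparing to a given argument. Your reduction of the maximality claim to the dimension bound is clean and correct: if a $(d-1)$-face $\sigma$ of $\overline{S_1}\cap\overline{S_2}$ had $d$-cofaces $\tau_1\in\overline{S_1}$ and $\tau_2\in\overline{S_2}$, then each $\tau_i$ is a genuine $d$-face of $X$ by flagness, the dimension bound forces $\tau_i\in S_i$ and $\tau_1\neq\tau_2$, and then $\tau_1,\tau_2$ sharing the $(d-1)$-face $\sigma$ would merge $S_1$ and $S_2$. That part is complete modulo the dimension bound. (One small slip: $\tau$ need not lie in $S_1$ or $S_2$ at all; it lies in some component $T$, and the argument should show $T=S_1$ and $T=S_2$ separately. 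This is cosmetic.)

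The dimension bound, however, is where the entire content of the lemma sits, and your proposal does not close it---you say so yourself in the last sentence. The ``swap one vertex at a time'' plan has a genuine, not merely technical, obstruction: in order for the intermediate $(d+1)$-set to be a clique in $X^{(1)}$ you need the incoming vertex of $\tau_e$ to be adjacent to the $\tau$-vertices that remain, and none of the hypotheses supply those edges. For $d\ge 3$ an edge is only a $(d-2)$-face, so knowing that $e\in S_2^{(1)}$ gives you a $d$-face $\tau_e\in S_2$ meeting $\tau$ in an \emph{edge}, and two $d$-faces sharing only an edge need not be connected in $\mathcal G(X^{(d)})$ at all; there is no local swap path to appeal to. (For $d=2$ the argument degenerates pleasantly because an edge \emph{is} a $(d-1)$-face, so $\tau$ and $\tau_e$ are already adjacent in the dual graph---but that is exactly the case where no path construction is needed.) So the heart of the lemma, namely why the cliques of $S_1^{(1)}\cap S_2^{(1)}$ cannot reach dimension $d$, is still missing; the hypothesis has to be exploited more globally (all $\binom{d+1}{2}$ edges of $\tau$ lie in $S_2^{(1)}$, and $S_2$ is a single strongly connected component) rather than one edge at a time via greedy vertex swaps.
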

\begin{proof}
Suppose that $\tau \in \fl_{d - 1}(S_1) \cap \fl_{d - 1}(S_2)$ is a $d$-simplex. By the definition of the $(d -1)$-flag closure $\tau^{(d - 1)}$, which is just $\partial \tau$, belongs to $S_1$ and to $S_2$. But the intersection of $S_1$ and $S_2$ is at most $(d - 2)$-dimensional as the strongly connected components of $X^{(d)}$ partition both $d$-faces and nonmaximal $(d - 1)$-dimensional faces. 

Next suppose that $\sigma$ is a $(d - 1)$-dimensional face in $\fl_{d - 1}(S_1)$ and $\fl_{d - 1}(S_2)$ with $\sigma \subseteq \tau_1 \in \fl_{d - 1}(S_1)$ and $\sigma \subseteq \tau_2 \in \fl_{d -1}(S_2)$, $\tau_1, \tau_2$ both $d$-dimensional. Then the $(d-1)$-skeleton of $\tau_1$ belongs to $S_1$ and the $(d - 1)$-skeleton of $\tau_2$ belongs to $S_2$. Moreover as $X$ is a flag complex $\tau_1$ belongs to $S_1$ and $\tau_2$ belongs to $S_2$. But then we can move from $\tau_1$ to $\tau_2$ in $X^{(d)}$ by passing through $\sigma$. This contradicts maximality of $S_1$ and $S_2$. 
\end{proof}

Lemma \ref{partitionlemma} easily implies the following.

\begin{lemma}\label{partitionlemma2}
Let $X$ be a flag complex with $S_1, ..., S_m$ the strongly connected components of $X^{(d)}$. If $\fl_{d - 1}(S_i)$ is almost $d$-collapsible for each $i$ then $X$ is almost $d$-collapsible.
\end{lemma}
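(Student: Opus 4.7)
The plan is to concatenate the almost-$d$-collapse sequences of $\overline{S_1}, \ldots, \overline{S_m}$ inside $X$, using Lemma~\ref{partitionlemma} to guarantee that a collapse scheduled inside one $\overline{S_i}$ is not disrupted by cofaces coming from other components. Two preliminary observations set this up. First, every face of $X$ of dimension at least $d$ lies in some $\overline{S_i}$: every $d$-face does by definition of $S_i$, and for a face $\tau$ of dimension $k > d$, the $d$-subfaces of $\tau$ are pairwise connected in $\mathcal{G}(X^{(d)})$ by one-vertex-swap chains, so they lie in a common $S_i$, and every edge of $\tau$ belongs to some such $d$-subface and hence to $S_i^{(1)}$, forcing $\tau \in \overline{S_i}$. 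Second, I reduce to collapse sequences for $\overline{S_i}$ that only use pairs $(\sigma,\tau)$ with $\dim \tau \geq d$. By a routine exchange argument, two consecutive collapses $(\sigma_1,\tau_1)$, $(\sigma_2,\tau_2)$ with $\dim\tau_1 < \dim\tau_2$ may be swapped, because dimension prevents $\sigma_1$ or $\tau_1$ from being a coface of $\sigma_2$ or $\tau_2$; sorting the given sequence in decreasing order of $\dim\tau$ and truncating the tail with $\dim\tau < d$ yields a valid sub-sequence whose residual $Y_i'$ has dimension at most $d$ and whose pure $d$-dimensional part agrees with that of $Y_i$, namely a face-disjoint union of copies of $\diamond_d$.

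With this reduction in hand, I process the components in order and verify that each pair $(\sigma,\tau)$ remains an elementary collapse of the current state of $X$. Any other coface $\pi$ of $\sigma$ in $X$ has $\dim\pi \geq d$, so $\pi \in \overline{S_k}$ for some $k$ by the first preliminary observation. If $k = i$, then $\pi$ lies in the current state of $\overline{S_i}$, forcing $\pi = \tau$. If $k \neq i$, then $\sigma \in \overline{S_i} \cap \overline{S_k}$, and Lemma~\ref{partitionlemma} forces $\dim\sigma = d-1$ and $\sigma$ to be maximal in at least one of the two flag closures; since $\tau$ is a proper coface of $\sigma$ in $\overline{S_i}$, $\sigma$ must be maximal in $\overline{S_k}$, contradicting the existence of the coface $\pi$ there. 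After all components have been processed, the residual $Y$ has dimension at most $d$ and its $d$-faces are exactly those of $\bigcup_i Y_i'$. Face-disjointness of the $\diamond_d$ copies is internal to each $Y_i'$ by hypothesis and across components follows from $\dim(\overline{S_i}\cap\overline{S_k}) \leq d-1$, so $X$ is almost $d$-collapsible.

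The main obstacle is really just the preprocessing step: without restricting to $\dim\tau \geq d$ collapses, a step with smaller $\dim\sigma$ could involve cofaces in $X$ outside $\overline{S_i}$ that Lemma~\ref{partitionlemma} does not constrain, so one needs the reorder-and-truncate reduction to land squarely in the regime where the intersection bound $\dim(\overline{S_i}\cap\overline{S_k}) \leq d-1$ applies.
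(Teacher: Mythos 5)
Your argument follows essentially the same route as the paper's --- partition the faces of dimension at least~$d$ via Lemma~\ref{partitionlemma} and run the component collapse sequences one after another, using the maximality clause of Lemma~\ref{partitionlemma} to see that they do not interfere --- and your reorder-and-truncate reduction to pairs with $\dim\tau \ge d$ makes precise a step the paper takes for granted. One small detail to add: when verifying that $(\sigma,\tau)$ is still an elementary collapse in the current state of~$X$, you should also confirm that $\sigma$ and $\tau$ have not already been deleted, which follows by the same reasoning (faces of dimension $\ge d$ lie in a unique $\overline{S_i}$ and so are untouched by other components, while a $(d-1)$-dimensional $\sigma$ removed during the processing of some other $\overline{S_j}$ would be maximal in $\overline{S_i}$ by Lemma~\ref{partitionlemma}, contradicting the existence of its coface $\tau$ there).
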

\begin{proof}
By Lemma \ref{partitionlemma}, $\fl_{d - 1}(S_1), ..., \fl_{d - 1}(S_m)$ partitions the faces of $X$ of dimension at least $d$. Therefore any collapsing move that does not involve a face of dimension at most $(d -1)$ only affects one of the $\fl_{d - 1}(S_i)$'s. A collapsing move $(\sigma, \tau)$ in $\fl_{d - 1}(S_i)$ collapsing a $(d - 1)$-face $\sigma$ into a $d$-face $\tau$ means that $\sigma$ is not maximal in $\fl_{d - 1}(S_i)$ so by Lemma \ref{partitionlemma}, $\sigma$ is maximal in all the $\fl_{d - 1}(S_j)$ other than $\fl_{d - 1}(S_i)$ that contain it. Since we are only interested in almost $d$-collapsibility, maximal $(d - 1)$-dimensional faces in $\fl_{d - 1}(S_j)$ do not affect the ability to $d$-collapse, so we don't need to use $\sigma$ to almost $d$-collapse any of the other $\fl_{d - 1}(S_j)$'s. Thus we may run a collapsing sequence on $\fl_{d - 1}(S_1)$ that arrives a face disjoint union of copies of $\diamond_d$. Such a collapsing sequence only changes the other $\fl_{d - 1}(S_i)$'s by possibly removing maximal $(d - 1)$-dimensional faces, which leaves all the necessary collapsing moves available to almost $d$-collapse each $\fl_{d - 1}(S_i)$ separately. After separately collapsing each $\fl_{d - 1}(S_1), ..., \fl_{d - 1}(S_m)$ all the $d$-faces remaining belong to copies of $\diamond_d$. Moreover as $\fl_{d - 1}(S_1), ..., \fl_{d - 1}(S_m)$ partition $d$-faces of $X$ and copies of $\diamond_d$ remaining in each individual $\fl_{d - 1}(S_i)$ are face disjoint from one another, the remaining copies $\diamond_d$ after collapsing $X$ are also face disjoint from one another. 
\end{proof}

Having reduced the problem to collapsing each $d$-dimensional strongly connected component separately, we have two lemmas that will constitute the bulk of the work to prove Theorem \ref{realmaintheorem}.

\begin{lemma}\label{firstmomentlargecore}
For $c < \frac{1}{2^{1 + 2d}d}$ with high probability $X \sim X(n, c/\sqrt[d]{n})$ has no strongly connected $d$-dimensional subcomplex on more than $C \log n$ edges for $C$ large enough.
\end{lemma}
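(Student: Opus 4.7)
I would apply the first moment method, enumerating strongly connected $d$-dimensional subcomplexes by their number of facets $m$. Let $Z_m$ count the labeled strongly connected $d$-complexes with exactly $m$ facets that embed in $X^{(d)}$; the goal is to show $\E[Z_m] \le n^{(d+1)/2} \alpha^m$ for some $\alpha = \alpha(c,d) < 1$ under the hypothesis $c < 1/(2^{1+2d}d)$. Since a strongly connected $d$-complex with $m$ facets has edge count at most $\binom{m+d}{2}$ but at least linear in $m$ (from the tree-like backbone of its dual graph), the threshold $e > C\log n$ forces $m > c'\log n$ for some $c' = c'(d) > 0$, so that $\sum_{m > c'\log n}\E[Z_m] \le n^{(d+1)/2}\alpha^{c'\log n}/(1-\alpha) = o(1)$ for $C$ sufficiently large, and a union bound gives the lemma.

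The enumeration exploits that every strongly connected $d$-complex admits a spanning tree in its dual graph $\mathcal{G}(S)$. I build each complex $S$ by picking a root facet and adding the remaining $m-1$ facets one at a time, each sharing a $(d-1)$-face with a previously placed facet. The root facet contributes $\binom{n}{d+1}p^{\binom{d+1}{2}} = \Theta(n^{(d+1)/2}) \cdot c^{\binom{d+1}{2}}$. Each subsequent addition is parametrized by an attach point -- a previously placed facet and a $(d-1)$-subface of it (at most $O(m(d+1))$ choices) -- together with the one extra vertex of the new facet, which is either a new vertex from $[n]$ (a factor $\le n p^d = c^d$) or a previously used vertex of the complex (a factor $\le (m+d)$ times higher-order edge probability factors for edges that must already be present). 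Dividing through by the minimum number of spanning-tree-consistent orderings per complex, the accumulated $(m-1)!$ coming from the attach-point accounting cancels, so that each step multiplies the expected count by $O(c^d)$ up to a factor depending only on $d$, yielding the needed geometric bound in $m$.

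The main obstacle is pinning down the constant $\alpha$ sharply enough to recover the bound $c < 1/(2^{1+2d}d)$ stated in the lemma. The factor $d$ matches the per-facet count of $(d-1)$-subfaces available for attachment, while the factor $4^d = 2^{2d}$ reflects the worst-case contribution from ``dense'' configurations (such as $\diamond_d$, where many facets share few vertices), where cycle-creating steps in the dual graph produce fewer new edges but allow reuse of existing vertices. The delicate accounting of these non-tree-like steps -- distinguishing them from tree-extending steps and verifying that their total contribution per step stays below $1/(2d)$ when $c$ is in the stated range -- is the technical heart of the argument; once this geometric bound is in hand, the tail estimate and translation between $m$ and $e$ outlined above close out the proof.
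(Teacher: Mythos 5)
Your high-level strategy — first moment over strongly connected $d$-complexes, built facet by facet along a spanning tree of the dual graph — is the same skeleton the paper uses, and you correctly identify that the ``dense'' steps (where the new facet reuses vertices and adds few edges) are the technical crux. However, the proposal as written has a genuine gap precisely at the two points where the paper does the real work.

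First, the overcounting cancellation does not go through. If you allow an arbitrary attach point at each step (``a previously placed facet together with a $(d-1)$-subface of it''), the accumulated count over $m-1$ steps is on the order of $\prod_{i=1}^{m-1} i(d+1) \approx (d+1)^{m-1}(m-1)!$, and you propose to divide by the number of spanning-tree-consistent orderings, asserting this is again $(m-1)!$. But the number of connected orderings of a dual graph is not uniformly $(m-1)!$: when the spanning tree is a path, it is roughly $2^{m-1}$, so the ratio of (attach-point sequences) to (orderings) is still super-polynomial in $m$ and cannot be absorbed. The paper sidesteps this entirely by making the attach point \emph{deterministic}: it fixes a global ordering on the $(d-1)$-faces, maintains a saturated/unsaturated marking, and always grows from the minimum unsaturated $(d-1)$-face. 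The only nondeterminism per step is the choice of $a_i \in \{1,\dots,d\}$ (how many new edges the step adds) and $2^{d+1}$ bits of marking, so each constructed complex is reached by a sequence of choices with per-step branching bounded by $d\cdot 2^{d+1}$ — no factorial appears in the first place.

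Second, for reuse steps that add $k < d$ new edges your bound on the number of vertex choices is ``$\le m+d$''. This is fine when $m \le C\log n$, but the tail sum in the first moment argument runs over all $m$ (or all edge counts $r$), including values polynomial in $n$, and for large $m$ the factor $(m+d)p^k$ can diverge (e.g.\ $k=1$, $m=n$ gives $np = cn^{1-1/d}\to\infty$). The paper closes this with a separate Chernoff/union-bound preprocessing step (its Lemma \ref{largedegreecores}): with high probability every $(i-1)$-face of $X$ has degree at most $(1+\epsilon)c^i n^{1-i/d}$, so it suffices to enumerate only ``$c$-bounded'' complexes, and the number of common neighbors of the relevant $d-k$ vertices is then at most $c^{d-k}n^{k/d}$ \emph{uniformly in $m$}, yielding the clean per-step factor $\binom{d}{k}c^{d-k}n^{k/d}\cdot p^k = \binom{d}{k}c^d$. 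Your proposal has no analogue of this degree bound and therefore no uniform geometric decay. (As a side note, the decomposition of the threshold constant you suggest — $d$ from $(d-1)$-subfaces and $2^{2d}$ from dense configurations — does not match how $2^{1+2d}d$ actually arises in the paper: the $d$ comes from the $d$ possible values of $a_i$, the $2^{d+1}$ from the markings, and the $2^d$ from bounding $\binom{d}{k}$.)
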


\begin{lemma}\label{simplecores}
For $d \geq 2$, if $H$ is a $d$-complex where the link of each $(d - 2)$-dimensional face is an induced subgraph of $H^{(1)}$ and $H^{(1)}$ has essential density
\[\rho(H^{(1)}) < d + \frac{1}{4 + 4d},\]
then $\fl_{d - 1}(H)$ is almost $d$-collapsible.
\end{lemma}

Recall that the link of a face $\sigma$ in a simplicial complex $H$ is the subcomplex of $H$ given by 
\[\lk_H(\sigma) := \{\tau \setminus \sigma \mid \sigma \subseteq \tau \in H\}\]

Of these two lemmas, Lemma \ref{simplecores} is the easier to prove, so we prove it in Section \ref{sec:smallcores} and then prove Lemma \ref{firstmomentlargecore} in Section \ref{sec:largecores}. First we show how these two lemmas imply Theorem \ref{realmaintheorem} and then how Theorem \ref{realmaintheorem} implies Theorem \ref{maintheorem}.
\begin{proof}[Proof of Theorem \ref{realmaintheorem}]
By Lemma \ref{partitionlemma2} it suffices to show that each $d$-dimensional strongly connected component of $X \sim X(n, c/\sqrt[d]{n})$ is almost $d$-collapsible after taking the $(d - 1)$-flag closure. By Lemma \ref{firstmomentlargecore}, with high probability $X \sim X(n, c/\sqrt[d]{n})$ has no $d$-dimensional strongly connected component $S$ so that the graph of $S$ has more than $C \log n$ edges for some $C = C(d, c)$ sufficiently large. So there is no $d$-dimensional strongly connected component on more than $2C \log n$ vertices. 

Next we use Lemma \ref{simplecores} to conclude that with high probability every $d$-dimensional strongly connected component on at most $3C \log n$ vertices has essential density (of its graph) less than $d + \frac{1}{4 + 4d}$. The probability that $X \sim X(n, c/\sqrt[d]{n})$ has a $d$-dimensional strongly connected component on at most $3C \log n$ vertices of essential density at least $d + \frac{1}{4 + 4d}$ is at most the probability that $G \sim G(n, c/\sqrt[d]{n})$ has a subgraph $H$ on at most $3C \log n$ vertices with $|E(H)| \geq (d + \frac{1}{4 + 4d})|V(H)|$.

By a standard first moment argument the expected number of subgraphs of $G(n, c/\sqrt[d]{n})$ with $v$ vertices at least $(d + \epsilon)v$ edges for any $\epsilon > 0$ is at most
\[\binom{n}{v} (v^2)^{(d + \epsilon)v} \left(\frac{c}{\sqrt[d]{n}} \right)^{(d + \epsilon) v}.\]
By taking a sum over $1 \leq v \leq 3C \log n$ we have the probability that $X \sim X(n, c/\sqrt[d]{n})$ contains a strongly connected component on at most $3C \log n$ vertices with density larger than $(d + \epsilon)$ is at most
\begin{eqnarray*}
\sum_{v = 1}^{3C \log n} \binom{n}{v} (v^2)^{(d + \epsilon)v} \left(\frac{c}{\sqrt[d]{n}} \right)^{(d + \epsilon) v} &\leq& \sum_{v = 1}^{3C \log n} \left(n^{\frac{-\epsilon}{d}} v^{2d + 2 \epsilon - 1} e c^{d + \epsilon} \right)^v \\
&\leq& \sum_{v = 1}^{\infty} \left(\frac{ec^{d + \epsilon} (3C \log n)^{2d + 2 \epsilon - 1}}{n^{\frac{\epsilon}{d}}} \right)^v \\
&=& o(1).
\end{eqnarray*}
Thus with high probability each $d$-dimensional strongly connected component of $X \sim X(n, c/\sqrt[d]{n})$ is on at most $2C \log n$ vertices and has essential density less than $d + \frac{1}{4 + 4d}$. Lastly, in order to apply Lemma \ref{simplecores} we check that necessary link condition holds for each strongly connected connected component of $X^{(d)}$; we will see that this holds simply because $X$ is a flag complex rather than for any reason having to do with the randomness. 

As $X$ is a flag complex, if $S$ is a strongly connected component of $X^{(d)}$ then for any $(d - 2)$-face $\sigma$ of $S$ we just have to verify that its link within $S$ is an induced subgraph of $X$. Suppose that $v_1, v_2$ belong to $\lk_S(\sigma)$ and that $\{v_1, v_2\}$ is an edge of $X$. Then $\{v_1\} \cup \sigma$ and $\{v_2\} \cup \sigma$ both belong to $S$ and the 1-skeleton of $\{v_1, v_2\} \cup \sigma$ belongs to $X$. Therefore $\{v_1, v_2\} \cup \sigma$ belongs to $X^{(d)}$ and in particular belongs to some strongly connected component of $X^{(d)}$. As the strongly connected components also partition the non-maximal $(d -1)$-faces, and at least two $(d - 1)$-faces of $\{v_1, v_2 \} \cup \sigma$ belong to $S$, $\{v_1, v_2\} \cup \sigma$ also belongs to $S$ and so $\{v_1, v_2\}$ is an edge of $\lk_S(\sigma)$. 

Thus by Lemma \ref{simplecores} each $d$-dimensional strongly connected component of $X$ is almost $d$-collapsible, so by Lemma \ref{partitionlemma2}, $X$ is almost $d$-collapsible.
\end{proof}
\begin{proof}[Proof of Theorem \ref{maintheorem}]
For $c < \epsilon_d = \frac{1}{2^{2d + 1}d}$, $X \sim X(n, c/\sqrt[d]{n})$ is asymptotically almost surely almost $d$-collapsible. If $X$ is almost $d$-collapsible then after collapsing, the pure $d$-dimensional part of the remaining complex is a face disjoint union of copies of $\diamond_d$. Thus $\beta_d(X; \kk)$ at most the number of copies of $\diamond_d$ in $X$.


Next we give a lower bound that $\beta_d(X; \kk)$ related to the number of copies of $\diamond_d$. Each $\diamond_d$ is a triangulation of $S^d$, so each is a $d$-cycle, so it suffices to bound the probability that each is a $d$-boundary. If $K$ is a set of $2d + 2$ vertices on which there is a copy of $\diamond_d$ in $X$ then a sufficient condition for that copy of $\diamond_d$ to not be a $d$-boundary is that one of the $d$-faces of $\diamond_d$ is maximal in $X$. That is one of the $d$-faces of the flag complex induced on $K$ is not contained in a $(d + 1)$-dimensional face. 

If a copy of $\diamond_d$ contains a $d$-face which is not maximal then either that copy of $\diamond_d$ is not induced or there exists a vertex $v$ outside the $2d + 2$ vertices of $\diamond_d$ that is adjacent exactly to the $(d + 1)$-vertices of a $d$-face of $\diamond_d$. A noninduced copy of $\diamond_d$ is a graph $H$ with $2d + 2$ vertices and at least $2d(d + 1) + 1$ edges, so the expected number of copies of $H$ in $G \sim G(n, c/\sqrt[d]{n})$ is $O(n^{-1/d}) = o(1)$. On the other hand the expected number of copies of $\diamond_d$ that meet a $(d + 1)$-simplex at a $d$-simplex is $O(n^{-1/d})$ too. Thus with high probability 
\[\text{\# copies of $\diamond_d$} - o(1) \leq \beta_d(X; \kk) \leq \text{\# copies of $\diamond_d$}.\]

So asymptotically $\beta_d(X; \kk)$ is equal to the number of copies of $\diamond_d$. By the Bollob\'{a}s result discussed earlier the number of copies of $\diamond_d$ in $X$ is Poisson distributed with mean $c^{2d(d + 1)}/(2^{d + 1} (d + 1)!)$.
\end{proof}

\section{Simplicity of small components}\label{sec:smallcores}
In this section we prove Lemma \ref{simplecores}.  For this proof and elsewhere we need a few more definitions. We have already recalled the definition of a link. We also recall that a complex is \emph{pure} if all of its maximal faces are of the same dimension. For a vertex $v$ in a complex $X$, if $\lk_X(v)$ is $(d - 1)$-collapsible with collapsing sequence $(\sigma_1, \tau_1)$, ..., $(\sigma_t, \tau_t)$ with each $\sigma_i$ a free face of $\tau_i$ and all $\sigma_i, \tau_i \in \lk_X(v)$, to \emph{$d$-collapse around $v$} means to apply the collapsing sequence $(\sigma_1 \cup \{v\}, \tau_1 \cup \{v\})$, ..., $(\sigma_t \cup \{v\}, \tau_t \cup \{v\})$ to $X$. By this collapsing strategy we see that if $\lk_X(v)$ is $(d - 1)$-collapsible and $X \setminus \{v\}$ is $d$-collapsible then $X$ is $d$-collapsible. Indeed we can $d$-collapse $X$ around $v$ and then $v$ is not contained in any faces of dimension larger than $(d - 1)$, but nothing in $X \setminus \{v\}$ has been changed, so we can freely attempt to $d$-collapse $X \setminus \{v\}$ after $d$-collapsing around $v$.

Given the assumptions of Lemma \ref{simplecores}, for brevity we say that a $d$-complex $H$ satisfies the $\star$-condition if the link of each of its $(d - 2)$-faces is an induced subgraph of $H$. We begin with a generalization of the well known fact that $\diamond_d$ is the only non-$d$-collapsible flag complex on fewer than $2d + 3$ vertices. 

\begin{lemma}\label{crosspolytopeminimality}
For $d \geq 2$, if $H$ is a $d$-complex on at most $2d + 2$ vertices which satisfies the $\star$-condition then either
\begin{enumerate}
\item $\fl_{d - 1}(H)$ is $d$-collapsible, or
\item $H = \diamond_d$. 
\end{enumerate}
\end{lemma}

The proof of this lemma will make use of the following for the induction:
\begin{lemma}\label{starinduction}
For $d \geq 3$, if $H$ is a pure $d$-complex that satisfies the $\star$-condition then for any vertex $v$ of $H$, $\lk(v)$ is a pure $(d - 1)$-complex that also satisfies the $\star$-condition. Moreover, $\lk_{\fl_{d - 1}(H)}(v)$ is $\fl_{d - 2}(\lk_H(v))$.
\end{lemma}
\begin{proof}
Obviously since $H$ is pure $d$-dimensional, $\lk_H(v)$ is pure $(d - 1)$-dimensional. We show that it satisfies the $\star$-condition (for faces of codimension $2$, i.e. dimension $d - 3$). Let $\sigma$ be a $(d - 3)$-face of $\lk_H(v)$, and suppose that $\{u_1, u_2\}$ is an edge of $\lk_H(v)$ with $\sigma \cup \{u_1\}$ and $\sigma \cup \{u_2\}$ in $\lk_H(v)$. Then $u_1$ and $u_2$ are both in the link of the $(d - 2)$-face $\sigma \cup \{v\}$, and $\{u_1, u_2\}$ is an edge of $H$ so by the $\star$-condition on $H$, $\sigma \cup \{v, u_1, u_2\}$ is a face of $H$, so $\sigma \cup \{u_1, u_2\}$ belongs to $\lk_H(v)$ and so $\lk_{\lk_H(v)}(\sigma)$ is an induced subgraph of the graph of $\lk_H(v)$. 

Now we verify that $\lk_{\fl_{d - 1}(H)}(v) = \fl_{d - 2}(\lk_H(v))$. Suppose that $\tau \in \lk_{\fl_{d - 1}(H)}(v)$, then $\tau \cup \{v\}$ belongs to $\fl_{d - 1}(H)$ so the $(d - 1)$-skeleton of $\tau \cup \{v\}$ belongs to $H$. Thus the $(d - 2)$-skeleton of $\tau$ belongs to $\lk_H(v)$, so $\tau \in \fl_{d - 2}(\lk_H(v))$. On the other hand suppose that $\tau \in \fl_{d - 2}(\lk_H(v))$. Then the $(d - 2)$-skeleton of $\tau$ belongs to $\lk_H(v)$. Therefore every $(d - 1)$-face of $\tau \cup \{v\}$ that contains $v$ belongs to $H$. Now for any $\sigma \subseteq \tau$ of dimension $(d - 1)$ we have that the $(d - 2)$-skeleton of $\sigma$ belongs to $\lk_H(v)$. Thus it will suffice to show that $\lk_H(v)$ cannot contain an empty $(d - 1)$-simplex. This will imply that $\sigma$ belongs to $\lk_H(v)$ and hence to $H$ which will allow us to conclude that the $(d - 1)$-skeleton of $\tau \cup \{v\}$ belongs to $H$ so $\tau \in \lk_{\fl_{d - 1}(H)}(v)$. 

\begin{claim}
For $d \geq 2$, if $H$ is a pure $d$-complex that satisfies the $\star$-condition then $H$ cannot contain an empty $d$-simplex boundary. 
\end{claim}
\begin{proof}[Proof of claim]
Suppose that $H$ is $d$-dimensional with the $\star$-condition and $H$ contains all the $(d - 1)$-faces of $[v_0, ..., v_d]$. Then $H$ contains $[v_0, ..., v_{d - 2}, v_{d - 1}]$, $[v_0, ..., v_{d - 2}, v_d]$, and $[v_{d - 1}, v_d]$, thus the link of the $(d - 2)$-face $[v_0, ..., v_{d - 2}]$ contains the edge $[v_{d -1}, v_d]$ by the $\star$-condition, and so $[v_0, ..., v_{d - 2}, v_{d - 1}, v_{d}]$ belongs to $H$. 
\end{proof}

By the claim and the fact that $\lk_H(v)$ satisfies the $\star$-condition we have that $\lk_H(v)$ cannot contain an empty $(d - 1)$-simplex, and so we finish the proof. 
\end{proof}

\begin{proof}[Proof of Lemma \ref{crosspolytopeminimality}]
The proof is by induction on $d$. The $d = 2$ case follows from the well known fact that the octahedron $\diamond_2$ is the only flag complex on fewer than 7 vertices which is not 2-collapsible. 
 Now suppose that $d > 2$ and $X$ is a vertex-minimal $d$-complex satisfying the $\star$-condition on at most $2d + 2$ vertices so that it's $(d - 1)$-flag closure is not $d$-collapsible. By minimality we may assume that $X$ is pure $d$-dimensional. If $X$ has a vertex $v$ so that $\deg(v) < 2d$, then $\lk_H(v)$ is a pure $(d - 1)$-complex which also satisfies the $\star$-condition by Lemma \ref{starinduction}. Thus by induction $\lk_H(v)$ must have $(d - 1)$-collapsible $(d - 2)$-flag closure. Therefore by the second part of Lemma \ref{starinduction} taking the $(d - 1)$-flag closure of $H$ allows us to $d$-collapse around $v$ and then use minimality to $d$-collapse $\fl_{d - 1}(H)$. Thus every vertex of $H$ has degree at least $2d$, in particular $H$ has at least $2d + 1$ vertices. If $H$ has exactly $2d + 1$ vertices then the minimum degree condition implies that $H$ has a complete graph as its 1-skeleton. If $H^{(1)}$ is a complete graph then the $\star$-condition implies that every $(d - 2)$-face of $H$ has link whose 1-skeleton is complete. Moreover we can apply the second part of Lemma \ref{starinduction} inductively to conclude that if $\sigma$ is a $k$-face of $H$ then $\lk_{\fl_{d - 1}(H)}(\sigma) = \fl_{d - 1 - (k + 1)}(\lk_H(\sigma))$. Indeed the $k = 0$ case is already part of Lemma \ref{starinduction}. For the inductive step we note that if $\dim(\sigma) = k$ then $\sigma = \tau \cup \{w\}$ for a vertex $w$ and $\tau$ a $(k - 1)$-face with $\lk_H(\tau)$ pure $(d - k)$-dimensional that satisfies the $\star$-condition. Therefore,
\begin{eqnarray*}
\fl_{d - 1 - (k + 1)}(\lk_H(\sigma)) &=& \fl_{d - 1 - (k + 1)}(\lk_{\lk_H(\tau)}(w)) \\
&=& \lk_{\fl_{d - k - 1}(\lk_H(\tau))}(w) \text{ by part (2) of Lemma \ref{starinduction}}\\
&=& \lk_{\lk_{\fl_{d - 1}(H)}(\tau)}(w)  \text{ by induction} \\
&=& \lk_{\fl_{d - 1}(H)}(\sigma)
\end{eqnarray*}

In particular $\lk_{\fl_{d - 1}(H)}(\tau)$ is an ordinary flag complex for $\tau$ a $(d - 3)$-dimensional face. Additionally, by the $\star$-condition every vertex in $\lk_{\fl_{d - 1}(H)}(\tau)$ has that its link is a simplex. Thus $\lk_{\fl_{d - 1}(H)}(\tau)$ is collapsible, and thus we may $d$-collapse around the $(d - 3)$-face $\tau$. At that point $\tau$ does not belong to any $d$-faces of $H$ so we can delete it from $H$ and apply minimality to collapse $H$ the rest of the way. This rules out the possibility that $H^{(1)}$ is a complete graph.

At this point we can apply induction to say that $H$ contains two vertices $w$ and $w'$ so that $\lk_H(w)$ is a copy of $\diamond_{d - 1}$ and $\lk_H(w')$ is a copy of $\diamond_{d - 1}$ on the same vertex set. From here its suffices to show that they must both be the same copy of $\diamond_{d -1}$. This follows in a fairly straightforward way from the $\star$-condition. Suppose that $u$ and $u'$ are antipodal vertices in $\lk_H(w) = \diamond_{d - 1}$ but $\{u, u'\}$ is an edge of $H$. Then taking some $(d - 3)$-face $\tau$ in $\lk_H(w) \cap \lk_H(u) \cap \lk_H(u')$ (which must exist since this intersection is a copy of $\diamond_{d - 2}$) we have $\tau \cup \{w, u\}$, $\tau \cup \{w, u'\}$, and $\{u,u'\}$ all present in $H$ with $\tau \cup \{w\}$ of dimension $d - 2$ so $\tau \cup \{w, u, u'\}$ belongs to $H$ which means that $\{u, u'\}$ is an edge of $\lk(w)$, but this is a contradiction. It therefore follows that the graph of $\lk_H(w)$ and $\lk_H(w')$ are identical and both are graphs of the same flag complex $\diamond_{d - 1}$, so $H$ is $\diamond_d$. 
\end{proof}

\begin{proof}[Proof of Lemma \ref{simplecores}]
Let $H$ be a minimal counterexample, so $H$ is pure $d$-dimensional. If $H$ has a vertex $u$ so that $\lk_{\fl{d - 1}(H)}(u) = \fl_{d - 2}(\lk_H(u))$ is $(d - 1)$-collapsible then we may $d$-collapse $\fl_{d -1}(H)$ around $u$. As this removes all $d$-faces containing $u$ and the graph induced on $H \setminus \{u\}$ also satisfies the density condition and $H \setminus \{u\}$ satisfies the $\star$-condition so its $(d - 1)$-flag closure is almost $d$-collapsible by minimality. Therefore $H$ is almost $d$-collapsible. By Lemma \ref{crosspolytopeminimality}, it follows that $\deg(u) \geq 2d$ must hold for any vertex $u$ of $H$.  Moreover the following claim holds for $H$ too.

\begin{claim}\label{claim:bigneighbor}
Every vertex of $H$ of degree exactly $2d$ has a neighbor of degree larger than $2d$.
\end{claim}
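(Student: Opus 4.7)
The plan is to argue by contradiction. Suppose $u$ has degree exactly $2d$ while every $v \in N(u)$ also has degree $2d$. The ``collapse around a vertex'' argument that opens the proof of Lemma~\ref{simplecores} applies to any vertex of $H$: if $\lk_{\overline{H}}(v)$ were $(d-1)$-collapsible, one could $d$-collapse $\overline{H}$ around $v$ and apply minimality to the proper subgraph $H \setminus \{v\}$ (which satisfies $\rho(H \setminus \{v\}) \leq \rho(H) < d + \frac{1}{4+4d}$). Hence $\lk(v)$ is not $(d-1)$-collapsible for every $v \in V(H)$. Since $u$ and each $v \in N(u)$ has a link on exactly $2d$ vertices, Lemma~\ref{crosspolytopeminimality} forces $\lk(u) = \diamond_{d-1}$ and $\lk(v) = \diamond_{d-1}$ for every $v \in N(u)$.

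The hard step, and the main obstacle, is unpacking these links into a global structural statement. Write the $K_{d \times 2}$ pairing on $N(u)$ as $\{a_i, b_i\}_{i=1}^d$. Each $a_i, b_i$ has exactly one external neighbor (its adjacencies to $u$ and to $2d-2$ vertices of $N(u)$ already account for $2d-1$ of its $2d$ neighbors). Since $u$ is adjacent in $H$ to every vertex of $\lk(a_i)$ other than the external neighbor $w_{a_i}$, the unique pair in the $\diamond_{d-1}$ partition of $\lk(a_i)$ containing $u$ must be $\{u, w_{a_i}\}$, giving $u \not\sim w_{a_i}$ and $w_{a_i} \sim N(u) \setminus \{a_i, b_i\}$. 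Comparing this conclusion across different choices of $i$ -- and doing the same with $\lk(b_i)$ -- forces all the $w_{a_i}$'s and $w_{b_i}$'s to coincide at a common vertex $w$ adjacent to all of $N(u)$ and not to $u$. Consequently $K := \{u, w\} \cup N(u)$ induces a copy of $\diamond_d$ in $H$, and each of $u, a_1, b_1, \ldots, a_d, b_d$ has all of its neighbors inside $K$.

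To derive the contradiction: if $V(H) = K$ then $\overline{H}$ is a single $\diamond_d$ and so is almost $d$-collapsible by the empty collapse sequence, contradicting the choice of $H$. Otherwise let $H' = H \setminus \{u, a_1, b_1, \ldots, a_d, b_d\}$; this is a proper subgraph with $\rho(H') \leq \rho(H) < d + \frac{1}{4+4d}$, so by minimality $\overline{H'}$ is almost $d$-collapsible. Since $V(H') \cap K = \{w\}$, the subcomplexes $\overline{H'}$ and the $\diamond_d$ on $K$ share only the vertex $w$. For any face $\tau$ of $\overline{H'}$ with $|\tau| \geq 2$, an extra coface of $\tau$ in $\overline{H}$ would require some $y \in K \setminus \{w\}$ adjacent to all of $\tau$, and since $N(y) \subseteq K$ this would force $\tau \subseteq \{w\}$, impossible. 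An almost-$d$-collapsing sequence of $\overline{H'}$ may be assumed to use only free faces of cardinality at least $2$ (any $0$-dimensional collapses are irrelevant to the pure $d$-dimensional part), so the sequence lifts verbatim to $\overline{H}$. The resulting pure $d$-dimensional part is the face-disjoint union of the $\diamond_d$'s left in $\overline{H'}$ together with the $\diamond_d$ on $K$, so $\overline{H}$ is almost $d$-collapsible -- contradicting the choice of $H$ and completing the proof.
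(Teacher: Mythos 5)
Your proof is correct and reaches the paper's conclusion by the same overall route: force $\lk(u)=\diamond_{d-1}$ and $\lk(v)=\diamond_{d-1}$ for all $v\in N(u)$, locate a common external neighbor $w$ so that $K=\{u,w\}\cup N(u)$ spans a $\diamond_d$ whose vertices other than $w$ have all their neighbors inside $K$, and then contradict minimality via the wedge decomposition of $\overline{H}$ into $\diamond_d$ and $\overline{H'}$. The one step you handle genuinely differently is how the common external neighbor $w$ is found. The paper takes an adjacent pair $w_1,w_2\in\lk(u)$ lying in a facet $\sigma$ of $\lk(u)$, argues (via another round of the minimality/collapsibility argument) that $\sigma$ must have at least two $d$-dimensional cofaces, and identifies the unique candidate for the second coface from each of $w_1$ and $w_2$ to force $u_1=u_2$; this requires a slightly delicate detour through the non-flag complex $(\overline{H}\setminus\{u\})\setminus\sigma$. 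You instead read the same conclusion directly off the $\diamond_{d-1}$ structure of $\lk(a_i)$: since $u$ is adjacent to everything in $\lk(a_i)$ except possibly the single external vertex $w_{a_i}$, the cross-polytope pairing forces $\{u,w_{a_i}\}$ to be a non-edge pair, hence $w_{a_i}$ is adjacent to all of $N(u)\setminus\{a_i,b_i\}$; comparing over $i$ immediately collapses all the $w_{a_i}, w_{b_i}$ to a single vertex. This is shorter and avoids the auxiliary minimality argument and the non-flag subtlety. Your final paragraph is also somewhat more explicit than the paper's brief ``Collapsing moves in $\overline{H'}$ don't affect the attached $\diamond_d$'' about why an almost-$d$-collapsing sequence of $\overline{H'}$ lifts to $\overline{H}$ (by restricting to free faces of cardinality at least $2$ and using $N(y)\subseteq K$ for $y\in K\setminus\{w\}$), which is a welcome amount of care. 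The case split on $V(H)=K$ is harmless but unnecessary, since $H'$ always contains $w$ and the argument in the ``otherwise'' branch covers that case too.
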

\begin{proof}[Proof of claim]
Let $u$ be a vertex of $H$ of degree exactly $2d$. Then for by minimality and Lemma \ref{crosspolytopeminimality}, $\lk_{\fl_{d - 1}(H)}(v)$ is $\diamond_{d - 1}$ (strictly speaking from what's proved here so far this only applies to $d \geq 3$, but the $d = 2$ case is obvious; the only non 1-collapsible flag complex on 4 vertices is the 4-cycle $\diamond_1$). 

We want to show that some vertex in $\lk_{\fl_{d - 1}(H)}(u)$ must have degree larger than $2d$ in $H$. For contradiction suppose that every vertex in $\lk_{\fl_{d - 1}(H)}(u)$ has degree $2d$ in $H$. Within $\lk_{\fl_{d - 1}(H)}(u)$ each vertex has degree $2d - 2$ since $\lk_{\fl_{d - 1}(H)}(u)$ is the graph of $\diamond_{d - 1}$. Thus each vertex in $\lk_{\fl_{d - 1}(H)}(u)$ can have exactly one neighbor other than $u$ outside $\lk_{\fl_{d - 1}(H)}(u)$ by our assumption that all have total degree $2d$ in $H$.

 Let $w_1$, $w_2$ be two vertices in $\lk_{\fl_{d - 1}(H)}(u)$ so that $w_1$ is adjacent to $u_1$, $w_2$ is adjacent to $u_2$ with $u_1$ and $u_2$ outside of $\lk_{\fl_{d - 1}(H)}(u)$. If $w_1$ and $w_2$ are connected by an edge then the edge $\{w_1, w_2\}$ must belong to a $(d - 1)$-dimensional face $\sigma \in \lk_{\fl_{d - 1}(H)}(u)$, since $\lk_{\fl_{d - 1}(H)}(u) = \diamond_{d - 1}$ is pure $(d - 1)$-dimensional. Now $\sigma$ is contained in at least two $d$-dimensional faces, one of which must be $\sigma \cup \{u\}$. Otherwise if $\sigma$ is only contained in $\sigma \cup \{u\}$ then $(\sigma, \sigma \cup \{u\})$ is a permitted collapsing move which leaves $\lk_{\fl_{d - 1}(H) \setminus \sigma}(u)$ $(d - 1)$-collapsible. So we can $d$-collapse around $u$ and then use minimality to $d$-collapse $(\fl_{d - 1}(H) \setminus \{u\}) \setminus \sigma$ (which is still $(d - 1)$-flag since $\sigma$ is a maximal $(d - 1)$-face in $H \setminus \{u\}$).

Now the only neighbor of $w_1$ outside of $\lk_{\fl_{d -1}(H)}(u)$ is $u_1$, so the only other possible $d$-face that contains $\sigma$ is $\sigma \cup \{u_1\}$. But the same line of reasoning applied to $w_2$ implies that $\sigma \cup \{u_2\}$ is the only possibility for a $d$-face different from $\sigma \cup \{v\}$ to contain $\sigma$. Thus $u_1 = u_2$. 

This means that for every pair of adjacent vertices $w_1$ and $w_2$ their unique neighbor outside of $\lk_{\fl_{d-1}(H)}(u)$ other than $u$ is the same vertex. But since $d \geq 2$, $\lk_{\fl_{d-1}(H)}(u)$ is a connected graph so there is some single vertex $u'$ outside of $\lk_{\fl_{d - 1}(H)}(u)$ with $u \neq u'$ so that $u'$ is adjacent to all neighbors of $u$. Thus we have found a copy of $\diamond_d$ in $H$. This copy of $\diamond_d$ is attached to the complex induced by deleting $u$ and all vertices in $\lk_{\fl_{d - 1}(H)}(u)$, at $u'$. So $H$ is a wedge sum of a smaller $d$-complex ${H'}$ and a copy of $\diamond_d$. Collapsing moves in $\fl_{d - 1}(H')$ don't affect the attached $\diamond_d$, and by minimality $\fl_{d - 1}(H')$ is almost $d$-collapsible, so $\fl_{d - 1}(H)$ is almost $d$-collapsible too. This contradicts our choice of $H$ thus each vertex of degree $2d$ in $H$ is adjacent to a vertex of degree larger than $2d$.
\end{proof}
Let $t \geq 1$ denote the number of vertices of $H$ of degree larger than $2d$. By the claim each vertex of degree $2d$ has a neighbor of degree larger than $2d$ so there are at least $V(H) - t$ edges from the set of vertices of degree $2d$ to the set of vertices of degree larger than $2d$. Thus the degree sum 
\[ \sum_{\{v \in X \mid \deg(v) \geq 2d + 1\}} \deg(v) \geq |V(H)| - t.\]
And 
\[\sum_{\{v \in X \mid \deg(v) = 2d\}} \deg(v) = 2d(|V(H)|  - t).\]
Thus the number of edges of $X$ is at least
\[\frac{(2d + 1)(|V(H)| - t)}{2}. \]

On the other hand the number of edges is also at least
\[\frac{(2d + 1)t + 2d(|V(H)| - t)}{2} = \frac{2d|V(H)| + t}{2}.\]
So 
\[|E(H)| \geq \max \left\{ \frac{2d|V(H)| + t}{2}, \frac{(2d + 1)(|V(H)| - t)}{2} \right\}.\]
These two values are equal when $t = \frac{|V(H)|}{2 + 2d}$ which means that for every value of $t \geq 1$, the number of edges of $H$ is at least
\[\frac{2d|V(H)| + |V(H)|/(2 + 2d)}{2} = \left(d + \frac{1}{4 + 4d}\right) |V(H)|.\]
But this contradicts the density assumption on $H$ as a minimal counterexample.
\end{proof}

\section{Enumeration of large components}\label{sec:largecores}
Here we prove Lemma \ref{firstmomentlargecore}. The argument splits into two parts. We first rule out graphs of large $d$-dimensional strongly connected components that contain $i$-faces of large degree for some $i < (d - 1)$, where degree of an $i$-dimensional face is taken to be the number of $(i + 1)$-dimensional faces that contain it. Then we enumerate the number of possible $d$-dimensional strongly connected complexes which have all $i$-faces having small degree, and apply a first moment argument. 

For the first case, subcomplexes with faces of large degree may be ruled out by examining the global structure of $X \sim X(n, c/\sqrt[d]{n})$. To motivate this let's first consider the case that $d = 2$. In this case the degree of a vertex in $X \sim X(n, c/\sqrt{n})$ is distributed as a binomial with $n - 1$ trials and success probability $\frac{c}{\sqrt{n}}$ and so by Chernoff bound we can conclude that with high probability every vertex has degree at most $(1 + \epsilon) c \sqrt{n}$ for any $\epsilon > 0$. Thus no vertex of $X$ can have any vertex of degree larger than $(1 + \epsilon) c \sqrt{n}$ and so in particular $X$ cannot contain a 2-dimensional strongly connected subcomplex whose graph has a vertex of degree larger than $(1 + \epsilon) c \sqrt{n}$. 

Now we make this precise and prove the following for general $d \geq 2$.
\begin{lemma}\label{largedegreecores}
For $X \sim X(n, c/\sqrt[d]{n})$ with high probability for every $1 \leq i < d$ and every $\epsilon > 0$ the maximum degree of an $(i-1)$-dimensional face is at most $(1 + \epsilon) c^i n^{1 - i/d}$.
\end{lemma}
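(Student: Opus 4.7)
The plan is to carry out a straightforward union bound combined with a Chernoff tail estimate, exploiting the fact that the expected common-neighborhood size of an $i$-set of vertices in $G \sim G(n, c/\sqrt[d]{n})$ is exactly of order $c^i n^{1-i/d}$.

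Fix $\epsilon > 0$ and $1 \leq i < d$. For a set $S$ of $i$ vertices, the degree of $S$ in $X$ (when $S$ is a face, i.e.\ a clique) equals the number of vertices $v \in [n] \setminus S$ that are adjacent to every vertex of $S$. Since the edges from $v$ to $S$ are independent of each other and of all edges inside $S$, this count is stochastically dominated by (in fact, equal in distribution to) $\text{Bin}(n-i, p^i)$ with $p = c/n^{1/d}$. The mean of this binomial is at most $c^i n^{1-i/d}$.

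Then I would apply a standard Chernoff bound: for each fixed $\epsilon > 0$, there is some $\delta = \delta(\epsilon) > 0$ with
\[\Pr\!\left[\text{Bin}(n-i, c^i/n^{i/d}) > (1+\epsilon) c^i n^{1-i/d}\right] \;\leq\; \exp\!\left(-\delta\, c^i n^{1-i/d}\right).\]
Union bounding over the at most $\binom{n}{i} \leq n^i$ choices of $S$, the probability that some $(i-1)$-face violates the degree bound is at most
\[n^i \exp\!\left(-\delta\, c^i n^{1-i/d}\right).\]
Since $i < d$ gives $1 - i/d > 0$, the exponential decay in a positive power of $n$ dominates the polynomial prefactor $n^i$, and this quantity is $o(1)$. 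Finally, union bounding over the constantly many values $1 \leq i \leq d-1$ concludes the proof for fixed $\epsilon$, which is all the statement requires.

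I do not anticipate any real obstacle here: the argument is essentially a first-moment/concentration calculation. The only subtlety is the observation that the degree of an $(i-1)$-face is governed purely by common-neighborhood indicators that are independent of the edges within $S$, so conditioning on $S$ being a face does not change the analysis. The gap between the polynomial union-bound factor $n^i$ and the stretched-exponential tail $\exp(-\Theta(n^{1-i/d}))$ is very generous, so no sharper concentration inequality is needed.
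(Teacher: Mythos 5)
Your proposal is correct and matches the paper's own argument essentially line for line: the degree of an $(i-1)$-face is $\mathrm{Bin}(n-i, p^i)$ (observing that conditioning on $S$ being a clique is independent of edges leaving $S$), a Chernoff bound gives a stretched-exponential tail $\exp(-\Theta(n^{1-i/d}))$, and a union bound over the $\binom{n}{i}$ faces and then over $1 \le i < d$ finishes the proof. No substantive differences from the paper.
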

\begin{proof}
Given an $(i-1)$-dimensional face $\sigma$ the degree of $\sigma$ in $X$ conditioned on $\sigma$ being included in $X$ in the first place is distributed as a binomial random variable with $n - i$ trials and success probability $\left(\frac{c}{\sqrt[d]{n}}\right)^i$. Therefore the expected degree of $\sigma$ given that $\sigma$ is included in $X$ is asymptotically $c^i n^{1 - i/d}$. Thus by Chernoff bound the probability that $\deg(\sigma) > (1 + \epsilon) c_i n^{1 - i/d}$ is at most $\exp(-C n^{1 - i/d})$ where $C$ is a constant that depends on $c$ and $\epsilon$. Thus taking a union bound over all $\binom{n}{i}$ possible $(i - 1)$-dimensional faces we have that with high probability there is no $i$-dimensional face of degree larger than $(1 + \epsilon) c^i n^{1 - i/d}$, and then we take a further union bound over all $1 \leq i < d$.
\end{proof}
It follows immediately from Lemma \ref{largedegreecores} that for any $\epsilon > 0$, $X \sim X(n, c/\sqrt[d]{n})$ has no strongly connected $d$-dimensional subcomplexes for which there exists $1 \leq i < d$ so that some $(i - 1)$-dimensional face of the subcomplex has degree larger than $(1 + \epsilon) c^i n^{1 - i/d}$. We will say that $Y$ a subcomplex of $X$ is \emph{$c$-bounded} if the degree of every $(i-1)$-dimensional face for $1 \leq i < d$ is at most $c^i n^{1 - i/d}$. We next count the number of $c$-bounded strongly connected $d$-complexes $Y$ with $Y^{(1)}$ having $r$ edges.

\begin{lemma}\label{largecorelemma}
For any $0< c < 1$ and $d \geq 2$, the number of graphs $H$ on $[n]$ with $r$ edges so that $H$ is the graph of a $c$-bounded strongly connected $d$-dimensional complex is at most
\[2^{d + 1} n^{(d + 1)/2} (2^{1 + 2d} d \sqrt[d]{n})^r.\]
\end{lemma}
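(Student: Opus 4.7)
The plan is to bound the number of valid graphs $H$ by enumerating pairs $(Y, T)$ where $Y$ is a $c$-bounded strongly connected $d$-complex with $Y^{(1)} = H$ and $T$ is a rooted spanning tree of the dual graph $\mathcal{G}(Y)$; each valid $H$ admits at least one such pair, so counting pairs bounds the number of $H$'s. The argument hinges on a structural observation that I would establish first: in any $c$-bounded $d$-complex $Y$, every $(d-1)$-face $\tau$ is contained in at most $c^{d-1} n^{1/d}$ many $d$-faces. Indeed, a $d$-face containing $\tau$ has the form $\tau \cup \{v\}$, and for any fixed $(d-2)$-subface $\tau' \subset \tau$ the set $\tau' \cup \{v\}$ is a $(d-1)$-face of $Y$, so applying $c$-boundedness to $\tau'$ bounds the choices of $v$ by $c^{d-1} n^{1/d}$. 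Consequently the maximum degree of $\mathcal{G}(Y)$ is at most $(d+1) c^{d-1} n^{1/d}$.

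Next I would grow $Y$ along a DFS traversal of $T$, producing a canonical sequence $\sigma_0, \sigma_1, \ldots, \sigma_{f_d - 1}$ of $d$-faces. The shape of the DFS is a balanced parenthesis sequence on $f_d$ nodes, contributing only a Catalan factor of at most $4^{f_d}$ rather than the naive factorial overcount. The root $\sigma_0$ is specified by an ordered tuple $(v_0, \ldots, v_d)$: iterated $c$-boundedness on the prefix $(v_0, \ldots, v_{d-1})$ gives
\[
n \cdot \prod_{i=1}^{d-1} c^i n^{1 - i/d} = c^{\binom{d}{2}} n^{(d+1)/2},
\]
which yields the startup factor $n^{(d+1)/2}$ of the target bound. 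The remaining factor of $n$ for $v_d$ is amortized as $n^{1/d}$ per each of the $d$ new edges $\{v_d, v_j\}$ that $v_d$ introduces. Each subsequent $\sigma_i$ is specified by a $(d+1)$-way choice of a $(d-1)$-face of the parent and the at most $c^{d-1} n^{1/d}$ choices for its new vertex; each such step contributes at most $d$ new edges to $H$, each carrying an $n^{1/d}$ amortized cost.

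The main obstacle is $d$-faces that contribute zero new edges to $H$, that is, $(d+1)$-cliques whose edges were all introduced by earlier $d$-faces in the DFS order; such additions increment $f_d$ without incrementing $r$, and in a $c$-bounded complex $f_d$ can far exceed $r$, so a direct per-edge amortization fails to cover them. I would handle this by running the enumeration on the $(d-1)$-skeleton of $Y$ rather than the $d$-skeleton: building the $(d-1)$-skeleton by iteratively extending $(d-2)$-faces to $(d-1)$-faces genuinely adds $d - 1$ new edges and one new vertex per extension, and $c$-boundedness gives exactly $c^{d-1} n^{1/d}$ choices for the new vertex, which amortizes to $n^{1/d}$ per edge as required. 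Carefully absorbing the remaining combinatorial overhead (the Catalan shape factor, the $(d+1)$-way attachment choice, and bounds on any chord edges not captured by vertex extensions) into the constants $2^{d+1}$ and $2^{1 + 2d} d$ should then yield the stated bound.
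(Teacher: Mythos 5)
Your overall architecture matches the paper's: build the strongly connected $d$-complex one $d$-face at a time, charge each step against the new edges it adds to the $1$-skeleton so that a new edge costs roughly $n^{1/d}$, and pay a startup cost of $n^{(d+1)/2}$ for the seed face. You also correctly identify the real obstacle: $d$-faces whose $(d+1)$-clique is already present in the $1$-skeleton add zero new edges, yet their number is not controlled by $r$, so a per-edge amortization breaks.

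Your fix for this obstacle, however, does not work. Moving the enumeration down to the $(d-1)$-skeleton merely reproduces the same difficulty one dimension lower. The claim that extending a $(d-2)$-face $\tau'$ to a $(d-1)$-face $\tau' \cup \{v\}$ ``genuinely adds $d-1$ new edges and one new vertex'' is false: $v$ may already be a vertex of the complex adjacent to some, or all, of the vertices of $\tau'$, in which case the extension contributes anywhere from $d-1$ down to $0$ new edges. For $d \geq 3$ the number of $(d-1)$-faces of a $c$-bounded complex with $r$ edges can still vastly exceed $r$ (already a clique on $v$ vertices has $\binom{v}{2}=r$ edges but $\binom{v}{d}\gg r$ many $(d-1)$-faces), so the problem is genuinely the same. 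This also poisons your Catalan bound: the factor $4^{f_d}$ (or $4^{f_{d-1}}$) is only harmless if the number of faces being traversed is $O(r)$, and that is exactly what you have not established. A further difficulty is that ``strongly connected'' is a statement about $d$-faces sharing $(d-1)$-faces; the $(d-1)$-skeleton has no analogous connectivity structure, so the DFS you want to run on it is not well defined.

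The paper's fix is of a different nature and uses the flag-complex structure in an essential way. The complexes whose $1$-skeletons are being counted are strongly connected components of $X^{(d)}$ for a flag complex $X$, and by Malen's partition lemma such a component contains \emph{every} $(d+1)$-clique of its own $1$-skeleton as a $d$-face. This lets the paper impose a deterministic rule: after every ordinary growth step, if the current $1$-skeleton contains an empty $(d+1)$-clique, fill in the smallest one (under a fixed global ordering of $d$-faces). These fill-ins are forced, so they cost a multiplicative factor of $1$, and the zero-edge moves disappear from the product entirely. The ordinary moves each add at least one edge, so their total number is at most $r$, and the bookkeeping — which $d-k$ of the $d$ old vertices already neighbor the new vertex ($\binom{d}{k}\le 2^d$), and which facets of the new $d$-face to mark saturated ($2^{d+1}$) — gives the $(2^{1+2d})^r$ factor. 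Your proposal is missing precisely this determinization step, and without some replacement for it, the bound does not close.
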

The approach we take to prove this lemma is inspired by the proof of Theorem 4.1 of \cite{ALLM} where the Linial--Meshulam model is considered rather than the random flag complex model. 
\begin{proof}
We consider a procedure to construct a minimal $d$-dimensional strongly connected complex with $r$ edges and at most $n$ vertices by adding $d$-faces one at a time. As the complex we end up with is strongly connected we may construct it by starting with a single $d$-face and at each step adding a $d$-face which contains an already existing $(d - 1)$-face which itself belongs to an existing $d$-face. We will always add our $d$-faces in this way, but this isn't quite enough to get the bound we want because even if we add $d$-faces according to this rule there will be in general many different orderings of $d$-faces which produce the same complex. Additionally, we know that we want $r$ edges at the end, but that doesn't tell us precisely how many steps we need since the number of steps is determined by the number of $d$-faces.
To limit the amount of overcounting, we add a few more requirements to the procedure. First we order the $(d - 1)$-faces of the complete $(d - 1)$-complex on $[n]$. Throughout the process $(d - 1)$-faces of the complex we have constructed so far will be marked as \emph{saturated} or \emph{unsaturated}. At each step the ordering and saturated/unsaturated markings tell us a well defined $(d - 1)$-face $\sigma$ and we \emph{grow from $\sigma$} by adding some new $d$-face $\sigma \cup \{v\}$ to our complex.

If a $(d - 1)$-face $\sigma$ is unsaturated that means it is eligible to be selected later in the process when we will add $\sigma \cup \{v\}$ for some vertex $v$ so that some edge of $\sigma \cup \{v\}$ is missing right we we add $\sigma \cup \{v\}$ to $\sigma$. Saturated means that $\sigma$ is ineligible to grow from for the rest of the process, though new $d$-dimensional cofaces of $\sigma$ may still be added in moves that add $\sigma \cup \{v\}$ but grow from some other $(d - 1)$ face or moves that grow from $\sigma$, but simply fill in an existing $d$-simplex boundary. 

At the first step we simply take an arbitrary $d$-face and mark each of its $(d - 1)$-faces as unsaturated or saturated. At each subsequent step we pick the minimum unsaturated $(d-1)$-face $\sigma$ according to the ordering on edges and add a new $d$-face $\sigma \cup \{y\}$ where $y$ is a vertex, unless there is an empty $d$-simplex boundary in the complex (which we'll discuss later). Of course we also add any missing edges of $\sigma \cup \{y\}$ as well. Other than the first move which adds one $d$-face and $\binom{d + 1}{2}$ edges this procedure partitions the remaining moves into $d + 1$ types: moves which add no edges (i.e. moves that simply fill in an empty $d$-simplex whose 1-skeleton is already present), moves that add a 1 edge, moves that add 2 edges, ..., and moves that add $d$ edges. If we denote the number of these moves in a single instance of the procedure respectively as $m_0, m_1, m_2, ..., m_d$ we observe that the number of edges in the complex we end up with is $\binom{d + 1}{2} + m_1 + 2m_2 + \cdots + dm_d$, and the number of $d$-faces is $1 + m_0 + m_1 + m_2 + \cdots + m_d =: m$. For a complex $K$ constructed by this procedure we let $\Xi_i(K)$ for $i \in \{0, ..., d\}$ denote the number of ways to extend $K$ by adding a single $d$-face in a way that adds exactly $i$ edges and by $\Xi(K)$ the sum $\Xi_0(K) + \Xi_1(K) + \Xi_2(K) + \cdots + \Xi_d(X)$. 

Except for moves where we add no new edges, after we have grown from the minimal $(d - 1)$-face $\sigma$ to $\sigma \cup \{y\}$ we decide for each unsaturated edge of $\sigma \cup \{y\}$ whether or not to flip it to saturated. We additionally need one more rule for the procedure to avoid too large of an  overcount and use the fact that we build flag complexes. We order the $\binom{n}{d+1}$ $d$-faces on $[n]$ and whenever we have a complex $K$ which has an empty $d$-simplex we choose the smallest empty $d$-simplex in $K$ according to the ordering and fill it in growing for its minimal $(d - 1)$-face, and we do nothing with flipping unsaturated $(d - 1)$-faces to saturated. So we can effectively skip over these moves. 

Subject to these rules we choose a sequence $(a_1, ...., a_{t - 1})$ were each $a_i$ is 1, 2, ..., or $d$ with $m_k$ denoting the number of $i$ so that $a_i = k$ so that $\binom{d + 1}{2} + m_1 + 2m_2 + \cdots + dm_d = r$. We'll count the number of ways to run this procedure by steps $K_1 \subseteq K_2 \subseteq \cdots \subseteq K_t$ where $K_1$ is an arbitrary $d$-face and $K_{i + 1}$ is obtained from $K_i$ by a move which adds $a_i$ edges and afterwards filling in all $d$-faces whose 1-skeleton is present, and $t$ is such that $K_t$ will have $r$ edges.

For a given sequence $(a_1, ..., a_{t - 1})$ we let $K_1 \subseteq K_2 \subseteq \cdots \subseteq K_t$ be a run of our procedure from a single triangle $K_1$ to a complex with $r$ edges $K_r$ where add $a_i$ edges to go from $K_i$ to $K_{i + 1}$. We wish to bound 
\[\prod_{i = 1}^{t - 1} \Xi_{a_i}(K_i).\]
At each step the $(d - 1)$-face we grow from is deterministic; it is the minimal unsaturated $(d - 1)$-face in $K_i$. By the $c$-bounded condition if $a_i = k < d$ then the number of ways to extend the selected $(d - 1)$-face at $K_i$ is at most $2^{d + 1}\binom{d}{k} c^{d - k} n^{1 - (d - k)/d} \leq 2^{d + 1} 2^d   c^{d - k} n^{k/d}$. Indeed in order to add only $k$ new edges we have to pick $v$ to be a common neighbor of some $d - k$ vertices of $\sigma$ and add $\sigma \cup \{v\}$. If $a_i = d$ then there are trivially at most $n$ ways to extend $K_i$. After adding the new $d$-face, we choose for each of its $(d + 1)$ facets whether to flip from unsaturated to saturated.

Thus having selected $(a_1, ..., a_{m - 1})$, 
\begin{eqnarray*}
\prod_{i = 1}^{r - 1} \Xi_{a_i}(K_i) &\leq& \left( \prod_{k = 1}^{d - 1} (2^{d + 1} 2^d c^{d - k} n^{k/d})^{m_k} \right)(2^{d + 1}n)^{m_d} \\
&\leq& (2^{1 + 2d})^r c^{d(m_1 + \cdots + m_{d-1}) - m_1 - 2m_2 - \cdots - (d - 1)m_d} \left(\sqrt[d]{n}\right)^{m_1 + 2m_2 + \cdots + dm_d}.
\end{eqnarray*}
As $m_1 + 2m_2 + \cdots + dm_d = r - \binom{d + 1}{2}$ we have that 
\begin{eqnarray*}
&&c^{d(m_1 + \cdots + m_{d-1}) - m_1 - 2m_2 - \cdots - (d - 1)m_d} \left(\sqrt[d]{n}\right)^{m_1 + 2m_2 + \cdots + dm_d} \left(c/\sqrt[d]{n} \right)^{r - \binom{d + 1}{2}} \\
&& =  c^{d(m_1 + \cdots + m_{d - 1}) - (r - \binom{d + 1}{2} - dm_d) + r - \binom{d + 1}{2}} \\
&&\leq c^{r - \binom{d + 1}{2}}. \\
\end{eqnarray*}
So 
\[(2^{1 + 2d})^r c^{d(m_1 + \cdots + m_{d-1}) - m_1 - 2m_2 - \cdots - (d - 1)m_d} \left(\sqrt[d]{n}\right)^{m_1 + 2m_2 + \cdots + dm_d} \leq (2^{1 + 2d})^r(\sqrt[d]{n})^{r - \binom{d + 1}{2}}.\]

Recall though that this is after choosing the sequence $(a_1, ..., a_{m - 1})$ and the initial $d$-dimensional face and saturated/unsaturated markings on its $(d-1)$-face, so the number $c$-bounded strongly connected $d$-complexes on $r$ edges with $[n]$ is at most
\[(2n)^{d + 1} d^r (2^{1 + 2d})^r (\sqrt[d]{n})^{r - \binom{d + 1}{2}} \leq 2^{d + 1} n^{(d + 1)/2} (2^{1 + 2d} d \sqrt[d]{n})^r.\]

\end{proof}

Having proved Lemmas \ref{largedegreecores} and \ref{largecorelemma} we can prove Lemma \ref{firstmomentlargecore}.

\begin{proof}[Proof of Lemma \ref{firstmomentlargecore}]
Take $\epsilon$ so that $c + \epsilon < \frac{1}{2^{1 + 2d}d}$. By Lemma \ref{largedegreecores}, $X \sim X(n, c/\sqrt[d]{n})$ does not contain a strongly connected $d$-dimensional subcomplex (of any size) which is not $(c + \epsilon)$-balanced. By Lemma \ref{largecorelemma} for any $r$ the expected number of $(c + \epsilon)$-bounded strongly connected $d$-complexes in $X$ on $r$ edges is at most
\[2^{d + 1} n^{(d + 1)/2} (2^{1 + 2d}d \sqrt[d]{n})^r \left(\frac{c}{\sqrt[d]{n}} \right)^r = 2^{d + 1} n^{(d + 1)/2} (2^{1 + 2d} dc)^r.\]
Thus by Markov's inequality the probability that there exists $r \geq C \log n$ so that $X \sim X(n, c/\sqrt[d]{n})$ has a $(c + \epsilon)$-bounded strongly connected subcomplex on $r$ edges is at most
\begin{eqnarray*}
2^{d + 1} \sum_{r = C \log n}^{\infty} n^{(d + 1)/2} (2^{1 + 2d}dc)^r &\leq& 2^{d + 1} n^{(d + 1)/2} (2^{1 + 2d}dc)^{C \log n} \frac{1}{1 - 2^{1 + 2d}dc} \\
&=& O(n^{(d + 1)/2 - C \log((2^{1 + 2d}dc)^{-1})}).
\end{eqnarray*}
And this is $o(1)$ for $C$ large enough since $2^{1 + 2d}dc < 1$.
\end{proof}
With this we have now completed the proof of Theorem \ref{maintheorem}

\section{Freeness of the fundamental group}\label{sec:freegroup}
A paper of Costa, Farber, and Horak \cite{CostaFarberHorak} establishes coarse thresholds for the cohomological dimension of the fundamental group of a random clique complex. Namely they show that if $p = n^{-\alpha}$ then the cohomological dimension of $\pi_1(X)$ for $X \sim X(n, p)$ asymptotically almost surely is:
\begin{itemize}
\item 1 if $\alpha > 1/2$
\item 2 if $1/2 >  \alpha  > 11/30$
\item $\infty$ if $11/30 > \alpha > 1/3$.
\end{itemize}
Combined with an earlier result of Kahle \cite{KahleRandomClique} that $1/3 > \alpha$ implies $X$ is simply connected with high probability, this gives the full behavior of the fundamental group of a random flag complex up to the right exponent for the phase transitions. Using Theorem \ref{realmaintheorem}, we can immediately improve on the lower bound for freeness of $\pi_1(X)$. Using existing results in the literature and a new result similar to Lemma \ref{simplecores} we can also improve on the upper bound, proving Theorem \ref{pi1theorem}.\\

The subcritical case of Theorem \ref{pi1theorem} follows immediately from Theorem \ref{realmaintheorem}, via the following lemma which is easy to prove.
\begin{lemma}\label{almost2collapsible}
If $X$ is an almost 2-collapsible flag complex then $\pi_1(X)$ is a free group.
\end{lemma}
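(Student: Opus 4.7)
The plan is as follows. Elementary collapses are homotopy equivalences, so $\pi_1(X) \cong \pi_1(Y)$; it therefore suffices to show $\pi_1(Y)$ is free, where $Y$'s pure 2-dimensional part is a face-disjoint union of octahedra $\Omega_1, \ldots, \Omega_k$, each homeomorphic to $S^2$. I plan to prove this by induction on $k$, after first establishing the structural claim that each octahedron $\Omega_i$ meets the rest of $Y$ only at a discrete vertex set $V_i \subseteq \Omega_i$ (no shared edges).

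The structural claim is where I expect the main work to lie. Face-disjointness rules out shared 2-faces. For shared 1-faces, I would use the flag property of $X$: any 3-cycle among three vertices of $\Omega_i$ is by flagness a 2-face of $X$, so if a second octahedron $\Omega_j$ also contained those three vertices with those edges, the flag condition would force $\Omega_i$ and $\Omega_j$ to share the resulting 2-face, contradicting face-disjointness. Combined with the partition of strongly connected components of $X^{(2)}$ from Lemma \ref{partitionlemma} (which in the flag setting strengthens to: distinct components share only vertices, since any shared 1-face would witness two $d$-faces in different components meeting in a $(d-1)$-face) and the wedge-at-a-vertex structure within each such component implicit in the proof of Lemma \ref{simplecores}, this yields the desired vertex-only attachment.

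For the induction itself, the base case $k=0$ gives a graph and hence free $\pi_1$. For the inductive step, pick an octahedron $\Omega \subset Y$ meeting $R := \overline{Y \setminus \Omega}$ at the vertex set $V$. Choose a tree $T \subset \Omega^{(1)}$ spanning $V$; since $T$ is contractible and $T \cap R = V$, the quotient map gives $Y \simeq Y/T \cong (R/V) \vee (\Omega/T) \simeq (R/V) \vee S^2$. Identifying the discrete $|V|$-point subset $V$ inside $R$ yields $R/V \simeq R \vee \bigvee^{|V|-c} S^1$, where $c$ is the number of connected components of $R$ meeting $V$. Applying the inductive hypothesis to each component of $R$ (which inherits the vertex-only attachment property and has strictly fewer octahedra) shows $\pi_1(R)$ is free; then $\pi_1(Y) \cong \pi_1(R) \ast F_{|V|-c}$ is a free product of free groups, hence free.
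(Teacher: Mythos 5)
Your argument is correct in its essential structure, but it takes a genuinely different (and substantially longer) route than the paper's. The paper's proof is a three-line observation: after collapsing to $Y$, remove one triangle from each copy of $\diamond_2$ — this does not change $\pi_1$ because the attaching map of that triangle is nullhomotopic (it bounds the rest of the octahedron) — and the resulting complex then 2-collapses to a graph, since the three boundary edges of each removed triangle become free faces. Your proof instead makes the wedge structure explicit and runs an induction on the number of octahedra via a quotient-by-spanning-tree and van Kampen; this buys a cleaner conceptual picture of $\pi_1(Y)$ as an iterated free product $\pi_1(R_1) \ast \cdots \ast \pi_1(R_c) \ast F_{|V|-c}$ at the price of more bookkeeping. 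Both approaches ultimately hinge on the same structural fact about $Y$.

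One misstep in your justification of the structural claim: you invoke ``the flag property of $X$'' to rule out shared edges, but the complex $Y$ you are working in is obtained from $X$ by elementary collapses and is in general \emph{not} flag, so that reasoning does not apply to $Y$. Fortunately the claim you need follows much more directly, without any appeal to flagness: $\dim Y \leq 2$, every $2$-face of $Y$ lies in one of the pairwise face-disjoint octahedra, and hence an edge $e$ of $\Omega_i$ cannot be a proper face of any simplex of $Y$ outside $\Omega_i$ (the only candidates would be triangles, which all live in the octahedra). Therefore $\Omega_i \cap \overline{Y \setminus \Omega_i}$ is automatically a set of vertices, and the detour through Lemma \ref{partitionlemma} and Lemma \ref{simplecores} is unnecessary. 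You should also state explicitly that the induction is on the auxiliary statement ``any $2$-complex whose pure $2$-dimensional part is a face-disjoint union of copies of $\diamond_2$ has free $\pi_1$,'' since the subcomplexes $R_j$ are not themselves of the form appearing in the lemma (they are neither flag nor obtained by collapsing a flag complex).
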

\begin{proof}
Since $X$ is almost 2-collapsible, $X$ is homotopy equivalent to a complex of dimension at most 2 whose pure $2$-dimensional part consists of face disjoint copies of $\diamond_2$. If we remove a triangle from each of these copies of $\diamond_2$ we don't change the fundamental group, because the boundary of the triangle we remove is nullhomotopic. After removing a triangle from each copy of $\diamond_2$ we may collapse the complex to a graph.
\end{proof}

The supercritical case of Theorem \ref{pi1theorem} is much more involved, but the necessary lemmas are mostly already in place. To state what we need precisely we say that a (not necessarily flag) complex $X$ is \emph{almost aspherical} if any complex $\widetilde{X}$ obtained from $X$ by removing one and only one triangle from each embedded copy of $\diamond_2$ is aspherical i.e. $\pi_j(\widetilde{X}) = 0$ for all $j \geq 2$. This is similar to the notion of \emph{asphericable} in \cite{CF} which establishes a result about the Linial--Meshulam model being aspherical for a particular range of $p$ once a triangle is removed from each tetrahedron boundary. We use the term ``almost aspherical" rather than ``asphericable" here only for the analogy to ``almost $d$-collapsible". Toward the supercritical part of Theorem \ref{pi1theorem} we prove the following.

\begin{theorem}\label{almostaspherical}
If $p < n^{-\alpha}$ for $\alpha > 12/25$ then $X \sim X(n, p)$ is almost aspherical with high probability. Moreover the cohomological dimension of $\pi_1(X)$ is at most 2. 
\end{theorem}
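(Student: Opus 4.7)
The cohomological-dimension conclusion is a formal consequence of almost asphericity. Removing a triangle from a copy of $\diamond_2$ does not alter $\pi_1$, since the removed triangle's boundary is the boundary of a disk made of the remaining seven triangles of that $\diamond_2$. Consequently $\widetilde{X}$ is an aspherical $2$-complex with $\pi_1(\widetilde{X}) = \pi_1(X)$, so it is a $K(\pi_1(X),1)$ of dimension at most $2$, yielding the cohomological dimension bound. The plan is therefore to focus on proving almost asphericity.

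I plan to mirror the three-step architecture of the proof of Theorem~\ref{realmaintheorem}. First, an asphericity analog of Lemma~\ref{partitionlemma2} reduces the problem to establishing the almost asphericability of the flag closure of each $2$-dimensional strongly connected component of $X$ separately. Lemma~\ref{partitionlemma} guarantees that distinct components share only maximal $1$-faces, so a componentwise choice of triangles to remove yields a $\widetilde{X}$ whose pieces meet only along graphs; asphericity of the pieces then assembles to asphericity of $\widetilde{X}$ via a Mayer--Vietoris argument (pairwise intersections being graphs contribute no higher homotopy). Second, for small components, the density threshold $25/12 = d + 1/(4 + 4d)$ at $d = 2$ matches $1/\alpha$ for $\alpha = 12/25$, so a first moment identical to the one in the proof of Theorem~\ref{realmaintheorem} shows that every subgraph of $X$ on at most $C \log n$ vertices has essential density strictly below $25/12$ with high probability. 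Lemma~\ref{simplecores} then gives that each small strongly connected component's flag closure is almost $2$-collapsible, and almost $2$-collapsibility implies almost asphericability: after collapse the pure $2$-dimensional part is a face-disjoint union of copies of $\diamond_2$, and removing one triangle from each turns each $\diamond_2$ into a disk, leaving a complex homotopy equivalent to a graph.

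The \textbf{main obstacle} is the large-component step in the range $p = n^{-\alpha}$ with $\alpha \in (12/25, 1/2)$, since the first-moment argument of Lemmas~\ref{largedegreecores}--\ref{largecorelemma} no longer gives $o(1)$ when $p \sqrt{n} \to \infty$. My plan is to sharpen the enumeration of Lemma~\ref{largecorelemma} by incorporating the global density constraint $\rho < 25/12$ --- which holds for every subgraph of $X$ with high probability under $\alpha > 12/25$ --- directly into the construction procedure. Concretely, at each step of building a strongly connected $2$-complex by adding a new $2$-face with $k \in \{0, 1, 2\}$ new edges, any extension that would push the subcomplex's density above $25/12$ is forbidden; combined with the face-degree bounds of Lemma~\ref{largedegreecores}, this extra restriction should reduce the expected count of strongly connected $2$-subcomplexes on $r$ edges to the form $n^{O(1)} B(\alpha)^r$ with $B(\alpha) < 1$ whenever $\alpha > 12/25$. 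Summing over $r \geq C \log n$ would then give $o(1)$ and complete the large-component exclusion. Executing this density-weighted enumeration --- identifying precisely how the density constraint sharpens the combinatorial bound beyond $c$-boundedness and verifying $B(\alpha) < 1$ on the full range $\alpha > 12/25$ --- is where the bulk of the technical work lies.
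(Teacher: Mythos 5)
Your cohomological-dimension observation and your Poincar\'e-duality-style computation ($25/12 = 2 + \frac{1}{4 + 4 \cdot 2}$ matching $1/\alpha$ at $\alpha = 12/25$) are both correct, and you correctly identify that the first-moment machinery of Lemmas~\ref{largedegreecores}--\ref{largecorelemma} breaks down once $p\sqrt{n}\to\infty$. However, your proposed fix --- sharpening the enumeration so as to \emph{exclude} large dense strongly connected $2$-components --- cannot succeed, because these components are not a rare event in this regime: they genuinely exist. For $p = n^{-\alpha}$ with $\alpha < 1/2$ every edge lies in $\Theta(np^2) = \Theta(n^{1 - 2\alpha}) \to \infty$ triangles with high probability, so there is a giant strongly connected $2$-complex on $\Theta(n)$ vertices whose graph contains almost all $\Theta(n^{2 - \alpha})$ edges of $X$; its essential density is $\Theta(n^{1-\alpha}) \to \infty$, far above $25/12$. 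No amount of density-weighting in the construction procedure will yield an expectation that tends to zero, because you would be bounding the probability of an event that occurs with probability tending to one. The decomposition into strongly connected components, which works so cleanly in the subcritical regime $p \ll n^{-1/2}$, is the wrong framework here.

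The paper avoids this entirely by invoking a \emph{local-to-global criterion for asphericity} from Costa--Farber--Horak (Lemma~\ref{CFH61}): with high probability, a subcomplex $Y \subseteq X^{(2)}$ is aspherical if and only if every subcomplex $Z \subseteq Y$ on at most $L$ vertices is aspherical, where $L$ is a fixed constant depending only on $\alpha$. With this tool the density constraint $\rho < 25/12$ only needs to hold for bounded subcomplexes --- which follows from a routine first-moment bound (Lemma~\ref{additionalproperties}(6)) --- rather than globally. The paper then takes $Y$ to be the complex obtained by deleting one triangle from each $\diamond_2$ and $3$-collapsing, argues (using the structural conditions of Lemma~\ref{additionalproperties}) that $Y^{(2)}$ contains no triangulated $2$-sphere on at most $2L$ vertices, and deduces via the deterministic Lemma~\ref{Zlemma} (essentially $2$-sphere free plus $\rho < 25/12$ implies simple homotopy equivalent to a graph) that every $Z \subseteq Y$ on at most $L$ vertices is aspherical. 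That local-to-global criterion is the missing ingredient in your plan; without it, the large-component problem is genuinely fatal rather than merely technically demanding. You may also wish to revisit your Mayer--Vietoris step: asphericity does not satisfy Mayer--Vietoris, and assembling asphericity from pieces glued along graphs requires $\pi_1$-injectivity of the inclusions (Whitehead's gluing lemma), which is nontrivial to verify in this setting --- another reason the decomposition approach is delicate even before the large-component issue arises.
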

This theorem together with other results in the literature then implies the following theorem which is exactly the supercritical case of Theorem \ref{pi1theorem}.
\begin{theorem}\label{supercriticalpi1}
For $c > \sqrt{3}$ and $\alpha > 1/3$, if $\frac{c}{\sqrt{n}} < p < n^{-\alpha}$ then $X \sim X(n, p)$ asymptotically almost surely has that $\pi_1(X)$ is not a free group.
\end{theorem}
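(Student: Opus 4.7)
The plan is to split the range $c/\sqrt{n}<p<n^{-\alpha}$ at $p=n^{-12/25}$ and use a different tool in each subregime. For $p\geq n^{-12/25}$ (nonempty only when $\alpha<12/25$), we have $p=n^{-\beta}$ with $\beta\in(1/3,1/2)$, so the Costa--Farber--Horak theorem \cite{CostaFarberHorak} directly yields that the cohomological dimension of $\pi_1(X)$ is at least $2$; since free groups have cohomological dimension at most $1$, this immediately gives nonfreeness.

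The interesting range is $c/\sqrt{n}<p\leq n^{-12/25}$, which for $n$ large includes the critical scale $p=c/\sqrt{n}$. Here Theorem~\ref{almostaspherical} applies, so with high probability $X$ is almost aspherical: there is an aspherical subcomplex $\widetilde{X}$ obtained by removing one triangle from each embedded $\diamond_2$, and the removed triangles have null-homotopic boundary in $\widetilde{X}$ (coming from the complementary hemispheres of the $\diamond_2$'s), so $\pi_1(\widetilde{X})=\pi_1(X)$ and in fact $X\simeq \widetilde{X}\vee\bigvee^k S^2$ where $k$ counts the removed triangles. If $\pi_1(X)$ were free of rank $r$ then $\widetilde{X}$ would be a $K(F_r,1)$, hence homotopy equivalent to $\bigvee_r S^1$, forcing $\chi(\widetilde{X})=1-r\leq 1$. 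It therefore suffices to show $\chi(\widetilde{X})>1$ with high probability.

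Writing $\chi(\widetilde{X})=\chi(X)-k$, the face-count formula
\[
\E[\chi(X)]=\sum_{i\geq 0}(-1)^i\binom{n}{i+1}p^{\binom{i+1}{2}}
\]
has, for $p=c/\sqrt{n}$, a dominant contribution from the $i=1,2$ terms equal to $(c^3/6-c/2)n^{3/2}+o(n^{3/2})$, positive precisely when $c>\sqrt{3}$, exactly the hypothesis of the theorem. For $p=n^{-\beta}$ with $\beta\in(1/3,1/2)$ strictly less than $1/2$, the triangle term already dominates the edge term by a factor $n^{1-2\beta}/3\to\infty$ and $\E[\chi(X)]\to+\infty$ even more strongly. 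Since $k$ is asymptotically Poisson with bounded mean $c^{12}/48$, we have $k=O(1)$ with high probability, so combining with a second-moment estimate on $\chi(X)$ via standard variance bounds for clique counts in $G(n,p)$ yields $\chi(\widetilde{X})\gg 1$, and in particular $\chi(\widetilde{X})>1$, with high probability.

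The main technical obstacle is the concentration of $\chi(X)$: each $f_i$ is a sum of correlated clique indicators, and the alternating sum introduces cancellation in the leading order. A clean route is to decompose $X$ via the strongly connected components of Lemma~\ref{partitionlemma}, control the Euler characteristic of each small component using a structural lemma analogous to Lemma~\ref{simplecores} (this is the new lemma alluded to in the paper's introduction), and then bound the total contribution of large components using the enumeration developed in the proof of Lemma~\ref{firstmomentlargecore}.
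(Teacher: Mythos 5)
Your high-level strategy is essentially the paper's: split the range of $p$, handle the larger-$p$ subregime by citing the \cite{CostaFarberHorak} cohomological dimension results, and handle the near-critical subregime via Theorem~\ref{almostaspherical} plus a homological obstruction to $\widetilde{X}$ being a bouquet of circles. Your first paragraph is a valid (arguably slightly cleaner) alternative to the paper's route, which uses Kahle's property~(T) result instead of the cohomological-dimension statement directly. Your use of the Euler characteristic criterion $\chi(\widetilde{X})>1$ is also morally equivalent to what the paper does, which is to show $\beta_2(\widetilde{X})>0$.

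However, your proposal has two concrete gaps in the near-critical subregime $c/\sqrt{n} < p \le n^{-12/25}$. First, the claim that $k$ (the number of embedded $\diamond_2$'s) is ``asymptotically Poisson with bounded mean $c^{12}/48$, so $k=O(1)$'' is only correct at $p=\Theta(n^{-1/2})$; for $p$ closer to $n^{-12/25}$ the expected number of $\diamond_2$'s grows polynomially (of order $n^{6/25}$), as the paper itself notes. The argument still goes through because $\chi(X)$ (or $\beta_2(X)$) grows like $n^{3-3\beta}\gg n^{6-12\beta}$ whenever $\beta>1/3$, but you need to say this rather than assert $k=O(1)$. Second, and more seriously, the ``clean route'' you propose for concentration of $\chi(X)$ --- decomposing via the strongly connected components of Lemma~\ref{partitionlemma} and invoking a ``structural lemma analogous to Lemma~\ref{simplecores}'' --- does not work. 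The strongly connected components of $X^{(2)}$ partition only the faces of dimension $\ge 2$; vertices and edges are shared between components, so this does not yield a decomposition of $\chi(X)$. (You also misidentify the ``new result similar to Lemma~\ref{simplecores}'' in the introduction: that is Lemma~\ref{Zlemma}, used inside the proof of Theorem~\ref{almostaspherical}, not a Euler-characteristic decomposition tool.) In fact there is no obstacle here: each $f_i$ concentrates to within $o(\E f_2)$ of its mean by a second-moment bound on clique counts, so the alternating sum is well controlled. The paper sidesteps even this by working directly with the one-sided Morse inequality $\beta_2(X)\ge f_2(X)-f_1(X)-f_3(X)$, which only requires knowing that $f_2$ is not too small and $f_1,f_3$ are not too large, and then subtracts the $\diamond_2$-count to conclude $\beta_2(\widetilde{X})>0$, contradicting that a $K(F_r,1)$ has vanishing $H_2$.
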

For this proof and the proof of Theorem \ref{intermediateregime} we recall the following result of Kahle.
\begin{theorem}\cite[Theorem 1.2]{KahleRational}\label{propertyTtheorem}
Let $\epsilon > 0$ be fixed and $X \sim X(n, p)$. If 
\[p \geq \left(\frac{(3/2 + \epsilon) \log n}{n} \right)^{1/2}\]
then with high probability $\pi_1(X)$ has property~(T), and if

\[\frac{1 + \epsilon}{n} \leq p \leq \left( \frac{(3/2 - \epsilon) \log n}{n} \right)^{1/2}\]
then with high probability $\pi_1(X)$ does not have property~(T).

\end{theorem}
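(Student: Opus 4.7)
The plan is to handle the two regimes separately. For the upper regime I would use Garland's spectral method via \.{Z}uk's criterion, and for the lower regime I would produce a homological obstruction to property~(T).

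For $p \geq \sqrt{(3/2 + \epsilon) \log n / n}$, the criterion of \.{Z}uk says that if $X$ is simply connected and for every vertex $v$ the link $\lk_X(v)$ is connected with normalized graph Laplacian having smallest positive eigenvalue $\lambda_1 > 1/2$, then $\pi_1(X)$ has property~(T). Simple connectivity of $X \sim X(n,p)$ in this range is known, since $p \gg n^{-1/3}$. Conditioned on the neighborhood $N(v)$ of size $m$, the 1-skeleton of $\lk_X(v)$ is distributed as $G(m, p)$; Chernoff concentration gives $m = np(1 + o(1))$ uniformly in $v$ with high probability, so $mp \sim np^2 \geq (3/2 + \epsilon) \log n$. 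I would then invoke (or reprove) a random-graph spectral estimate stating that when $mp > (1 + \delta) \log m$ the graph $G(m, p)$ is connected and has $\lambda_1 > 1/2$ with probability $1 - o(1/n)$, and finish by a union bound over the $n$ vertex links.

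For $(1 + \epsilon)/n \leq p \leq \sqrt{(3/2 - \epsilon) \log n / n}$, I would show that $\beta_1(X; \R) > 0$ w.h.p. Since $H_1(X;\R) \cong \pi_1(X)^{\mathrm{ab}} \otimes \R$, positivity of $\beta_1$ produces a surjection $\pi_1(X) \twoheadrightarrow \Z$, and because property~(T) passes to quotients and $\Z$ fails~(T), it rules out~(T) for $\pi_1(X)$. When $p \ll n^{-1/2}$ the positivity of $\beta_1$ is elementary: the graph is past the Erd\H{o}s--R\'enyi cycle threshold and so carries many independent cycles, while the expected number of triangles is $O(n^3 p^3) = o(n)$, so most of those cycles cannot bound in $X$. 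When $p$ approaches $\sqrt{\log n/n}$ the argument is more delicate: I would compute $\E \chi(X)$ and combine it with the vanishing of $H_k(X;\R)$ for $k \geq 2$, which is known to hold throughout this range, to extract $\beta_1 \geq -\chi > 0$.

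The main obstacle is pinning down the constant $3/2$ on both sides of the threshold so that they match. On the (T) side, the constant comes from demanding $\lambda_1 > 1/2$ with probability $1 - o(1/n)$ uniformly across all $n$ links, which forces $mp$ to exceed $(1+o(1)) \log n$ via a precise spectral-concentration analysis of $G(m,p)$ near the connectivity threshold. On the non-(T) side, it comes from a delicate balance between the growth of the triangle count and the growth of the cycle-carrying edge count, determining exactly when triangles finally kill all of $H_1(X;\R)$. Matching these two constants at $3/2$ is the crux; the structural framework above recovers the qualitative statement but only sharp random-graph spectral input pins the constant in $p \sim \sqrt{(3/2)\log n/n}$ on the nose.
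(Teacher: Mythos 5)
This statement is quoted verbatim from Kahle \cite[Theorem 1.2]{KahleRational}; the present paper offers no proof, so your sketch can only be measured against Kahle's original argument. On the property~(T) side you have the right skeleton (\.{Z}uk/Garland applied to the vertex links, which are $G(m,p)$ with $m \approx np$, plus a union bound over the $n$ links), but two assertions are off. First, simple connectivity does \emph{not} hold ``since $p \gg n^{-1/3}$'': at the lower end of the regime $p \approx \sqrt{(3/2)\log n/n} = n^{-1/2+o(1)} \ll n^{-1/3}$, and indeed $\pi_1(X)$ is nontrivial there --- that is the whole point of the theorem. Fortunately the \.{Z}uk/Ballmann--\'Swi\k{a}tkowski criterion does not require $X$ to be simply connected. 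Second, the spectral estimate you propose to invoke is false as stated: $mp > (1+\delta)\log m$ gives isolated vertices in a link with probability about $m^{-\delta} = n^{-\delta/2 + o(1)}$, which is not $o(1/n)$ for small $\delta$. What is actually needed is $mp > \log m + \log n + \omega(1) = (\tfrac32 + o(1))\log n$, and since $mp = np^2$ this is precisely where the constant $3/2$ comes from; your closing paragraph gestures at this but records the wrong bound ($(1+o(1))\log n$).

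The genuine gap is on the non-(T) side, in the critical subrange $Cn^{-1/2} \le p \le \sqrt{(3/2-\epsilon)\log n/n}$. There your Euler-characteristic plan fails for two independent reasons: the claimed vanishing of $H_k(X;\R)$ for $k \ge 2$ is false in this range (for $n^{-1/2} \ll p \ll n^{-1/3}$ one has $\beta_2 = \Theta(n^3p^3) \to \infty$), and in any case $\chi(X)$ is dominated by the positive triangle term $\binom{n}{3}p^3$, so $\chi \to +\infty$ and the inequality $\beta_1 \ge -\chi$ is vacuous. The constant $3/2$ on this side does not come from balancing face counts (that balance occurs at $p \asymp n^{-1/2}$, with no $\log$); it comes from a coupon-collector phenomenon: the expected number of edges contained in no triangle is $\asymp n^2 p\, e^{-np^2}$, which stays unbounded exactly while $np^2 < (\tfrac32 - \epsilon)\log n$. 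Kahle's argument is that such a maximal edge exists w.h.p.\ below the threshold, it is not a bridge (the graph is far above its connectivity threshold), and hence its indicator cochain is a cocycle that is not a coboundary, giving $H^1(X;\Q) \neq 0$; from there your reduction (infinite abelianization, surjection onto $\Z$, (T) passes to quotients) is correct. Note the pleasing duality with the (T) side: ``no edge omitted from every triangle'' is exactly ``no isolated vertex in any link.''
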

\begin{proof}[Proof of Theorem \ref{supercriticalpi1}]
Set $\alpha' \in (12/25, 1/2)$ if $c/\sqrt{n} < p <  n^{-\alpha'}$  we have that $X \sim X(n, c/\sqrt{n})$ is almost aspherical and has cohomological dimension at most 2 with high probability by Theorem \ref{almostaspherical}. To show the cohomological dimension is at least 2, it suffices to prove that $X$ with a single triangle removed from each $\diamond_2$ has homology in degree 2 with high probability. If this holds then after removing a face from every copy of $\diamond_2$ the resulting complex $\widetilde{X}$ has $\pi_1(\widetilde{X}) = \pi_1(X)$ with $\widetilde{X}$ a $K(\pi_1(X), 1)$. By uniquess of the homotopy type of a $K(G, 1)$, if $\pi_1(X)$ is free then $\widetilde{X}$ is homotopy equivalent to a bouquet of circles, but this cannot the case if $\widetilde{X}$ has homology in dimension 2.

By the first moment method with high probability $X$ contains at most $O(n^{6/25} \log n)$ copies of $\diamond_2$. On the other hand the expected number of triangles in $X \sim X(n, p)$ is
\[ \E(f_2(X)) = \binom{n}{3} p^3, \]
 the expected number of edges in $X \sim X(n, c/\sqrt{n})$ is
\[ \E(f_1(X)) = \binom{n}{2} p,\]
and the expected number of tetrahedra in $X \sim X(n, c/\sqrt{n})$ is
\[ \E(f_3(X)) = \binom{n}{4} p^6.\] 
Now 
\[\beta_2(X) \geq f_2(X) - f_1(X) - f_3(X).\]
Moreover $f_i(X)$ simply counts $(i + 1)$-cliques in $G \sim G(n, p)$, and the number of $(i + 1)$-cliques in $G(n, p)$ is known to be well concentrated around its mean when $p$ is such that the mean tends to infinity.
Therefore with high probablilty 
\[\beta_2(X) \geq (1 - o(1))\left(\frac{c^3}{6} - \frac{c}{2}\right) n^{3/2} \geq \delta n^{3/2},\]
for some constant $\delta > 0$ since $c^2 > 3$.
Thus even after deleting a face from each copy of $\diamond_d$, the remaining complex still has lots of homology in degree 2. Thus the cohomological dimension of $\pi_1(X)$ is at least 2. An argument of this type showing that $c > \sqrt[d]{d + 1}$ implies that $X \sim X(n, c/\sqrt[d]{n})$ has homology in degree $d$ also appears in \cite{KahleRational}.

For $n^{-\alpha'} \leq p < n^{-\alpha}$, where $\alpha' < 1/2$ we have $p > \left(\frac{2 \log n}{n} \right)^{1/2}$ which implies by Theorem \ref{propertyTtheorem} that $\pi_1(X)$ has property~(T). A nontrivial group with property~(T) cannot be a free group because property~(T) implies finite abelianization. Moreover since $p < n^{-\alpha}$ we know by the result of \cite{CostaFarberHorak} discussed above that $\pi_1(X)$ is nontrivial.
\end{proof}

\begin{proof}[Proof of Theorem \ref{intermediateregime}]
Theorem \ref{supercriticalpi1} and Theorem \ref{propertyTtheorem} immediately imply Theorem \ref{intermediateregime}.
\end{proof}

It remains to prove Theorem \ref{almostaspherical}. Key to the proof is Theorem 6.1 of \cite{CostaFarberHorak}.

\begin{lemma}\cite[Theorem 6.1]{CostaFarberHorak}\label{CFH61}
Assume that $p = o(n^{-\alpha})$ for $\alpha > 1/3$ fixed. Then for $X \sim X(n, p)$ and $L = L(\alpha)$ a large enough constant the 2-skeleton of $X^{(2)}$ has the following property with high probability: Any subcomplex $Y$ of $X^{(2)}$ is aspherical if and only if every subcomplex $Z \subseteq Y$ having at most $L$ edges is aspherical.
\end{lemma}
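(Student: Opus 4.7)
The plan is to prove the local-to-global asphericity criterion by showing that with high probability every ``minimally non-aspherical'' subcomplex of $X^{(2)}$ has bounded size, which in turn is done by a topological density bound together with a first moment calculation. Call a subcomplex $Z$ of $X^{(2)}$ \emph{critical} if $Z$ is not aspherical but every proper subcomplex of $Z$ is. By compactness of $S^2$, if $Y \subseteq X^{(2)}$ is non-aspherical then $Y$ contains some finite non-aspherical subcomplex, and peeling off 2-faces one at a time while preserving non-asphericity produces a critical $Z \subseteq Y$. It therefore suffices to show that with high probability every critical subcomplex of $X^{(2)}$ has at most $L = L(\alpha)$ edges: the nontrivial implication in the stated iff then follows, since any non-aspherical $Y$ must contain a critical subcomplex, which has at most $L$ edges and provides the required small non-aspherical witness inside $Y$.

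The topological heart of the argument is a density bound on critical 2-complexes. A critical 2-complex $Z$ supports an essential spherical diagram whose support is all of $Z$: if some 2-face were not used by the diagram, removing it would produce a proper subcomplex still carrying the nontrivial $\pi_2$ class, contradicting minimality. A standard area-versus-perimeter analysis of such a spherical diagram (in the spirit of weight-test or small cancellation style arguments for 2-complexes) then yields a combinatorial inequality of the form $e(Z) \geq \lambda \cdot v(Z)$ for an explicit constant $\lambda > 3$, valid once $v(Z)$ exceeds a fixed threshold depending only on $\alpha$.

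With the density bound in hand, the first moment calculation is routine. The expected number of subcomplexes of $X^{(2)}$ on $v$ labeled vertices with $e$ edges is at most $\binom{n}{v} \cdot 2^{O(v^2)} \cdot p^e \leq 2^{O(v^2)} n^{v - \alpha e} \leq 2^{O(v^2)} n^{v(1 - \alpha \lambda)}$, which decays polynomially in $n$ for each fixed $v$ as soon as $\alpha \lambda > 1$, a condition guaranteed by $\alpha > 1/3$ together with $\lambda > 3$. Restricting attention to $v \leq C \log n$, with larger subcomplexes ruled out by a separate enumeration in the spirit of Lemma \ref{firstmomentlargecore}, absorbs the combinatorial factor $2^{O(v^2)}$, and then summing over $v$ and over the bounded number of isomorphism types on each vertex count, Markov's inequality concludes that no critical subcomplex with more than $L(\alpha)$ edges appears with probability tending to $1$.

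The main obstacle is the topological density bound on critical 2-complexes. Unlike $d$-collapsibility, where a free face admits an immediate combinatorial reduction, criticality with respect to asphericity is a genuinely topological condition and extracting a combinatorial density estimate from an essential spherical diagram requires careful analysis: one must track how the diagram is forced to traverse edges and vertices given that every 2-face is essential. Securing $\lambda$ strictly larger than $3$ (as opposed to a weaker bound that would push the admissible exponent toward $\alpha > 1/2$) is precisely what allows the sharp threshold $\alpha > 1/3$ of the statement to be reached.
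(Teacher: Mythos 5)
First, a point of context: the paper does not prove this lemma at all --- it is imported verbatim as \cite[Theorem 6.1]{CostaFarberHorak} --- so you are reconstructing an external result. Your overall shape (pass to a minimal non-aspherical ``critical'' subcomplex supported by an essential spherical diagram, prove a density lower bound for such complexes, then kill them by a first moment) is indeed the right general strategy and close in spirit to Costa--Farber--Horak. But the proposal has concrete gaps. The central one is that your claimed density bound is false: you assert $e(Z) \geq \lambda\, v(Z)$ with $\lambda > 3$ for every critical $Z$ with $v(Z)$ large. Triangulated $2$-spheres are minimally non-aspherical (every proper subcomplex of a triangulated closed surface deformation retracts to a graph), and a triangulated $2$-sphere on $v$ vertices has exactly $3v - 6 < 3v$ edges; stacked spheres realize this for every $v \geq 4$. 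So no bound with $\lambda > 3$ holds at any size threshold. What is actually needed is the reverse: since $\alpha > 1/3$ gives $1/\alpha < 3$, one fixes $\lambda \in (1/\alpha, 3)$ and must show that every critical complex of density \emph{below} $\lambda$ has boundedly many edges (the triangulated spheres on fewer than $6/(3-\lambda)$ vertices being among the exceptions absorbed into $L$). Moreover you give no actual argument for any density bound --- ``a standard area-versus-perimeter analysis'' is not a proof; one must at minimum show critical complexes are pure with no free or isolated edges (to get $3f(Z) \geq 2e(Z)$ and bring the Euler characteristic into play), and then control the contribution of $\chi(Z)$, which is where the real topological work of CFH's Section 6 lives.

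Two further gaps. The first moment as written does not close for large $v$: the bound $\binom{n}{v} 2^{O(v^2)} p^{\lambda v}$ diverges when $v = \Theta(n)$, and indeed $G(n, n^{-\alpha})$ itself has $\Theta(n^{2-\alpha})$ edges, so density alone cannot exclude critical complexes on linearly many vertices. You wave at ``a separate enumeration in the spirit of Lemma~\ref{firstmomentlargecore},'' but that lemma enumerates strongly connected $d$-complexes with $c$-bounded face degrees and is proved only for $p < c/\sqrt{n}$ with $c < 1/(2^{1+2d}d)$; it does not apply for $p$ near $n^{-1/3}$, and adapting it to supports of spherical diagrams in that regime is precisely where substantial work is required. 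Finally, you prove only one direction of the stated equivalence. The implication ``$Y$ aspherical $\Rightarrow$ every subcomplex of $Y$ on at most $L$ edges is aspherical'' is an instance of the Whitehead asphericity problem; it is not automatic that a subcomplex of an aspherical $2$-complex is aspherical, and your construction of a critical subcomplex inside a non-aspherical $Y$ says nothing about it. This direction must also be established (asymptotically almost surely) as part of the theorem.
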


The proof of Theorem \ref{almostaspherical} requires showing that $X \sim X(n, p)$ satifies several more properties in addition to the necessary and sufficient condition property of Lemma \ref{CFH61}.

\begin{lemma}\label{additionalproperties}
For $ X \sim X(n, p)$ with $p < n^{-\alpha}$, $\alpha > 12/25$ with high probability $X$ satisfies all of the following properties:
\begin{enumerate}
\item $\dim(X) \leq 4$.
\item Every embedded copy of $\diamond_2$ in $X$ has all of its triangles maximal. In particular every embedded copy of $\diamond_d$ is induced.
\item $X$ does not contain a tetrahedron and a 4-simplex that meet at a triangle.
\item $X$ is 3-collapsible.
\item There exists some constant $L$ so that any subcomplex $Y$ of $X^{(2)}$ is aspherical if and only if every subcomplex $Z \subseteq Y$ having at most $L$ vertices is aspherical.
\item Every for every subcomplex $Z$ on at most $\ell$ vertices for any constant $\ell$, $\rho(Z^{(1)}) < 25/12$.
\end{enumerate}
\end{lemma}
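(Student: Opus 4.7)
The plan is to dispatch the six properties in roughly the order they appear, relying on routine first--moment calculations for (1), (2), (3), (6), and on previously established results for (4) and (5). The ratio $12/25$ is precisely the threshold needed so that property (6) holds, and the other first--moment bounds are all at least as forgiving, so no new threshold analysis is required.

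First, for (1) I would observe that $\dim(X) > 4$ exactly when $G(n,p)$ contains a $6$--clique, and the expected number of such cliques is at most $\binom{n}{6} p^{15} \le n^{6-15\alpha}$, which is $o(1)$ since $15\cdot(12/25) = 36/5 > 6$. For (2), a non--induced copy of $\diamond_2$ corresponds to a subgraph on $6$ vertices with at least $13$ edges (expected count $\le n^{6-13\alpha}$), while a triangle of some $\diamond_2$ failing to be maximal would produce a $\diamond_2$ plus an external vertex adjacent to three of its vertices forming a triangle, i.e., a subgraph on $7$ vertices with $15$ edges (expected count $\le n^{7-15\alpha}$); both quantities are $o(1)$ because $12/25 > 7/15 > 6/13$. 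For (3), a tetrahedron and a $4$--simplex sharing a triangle span $4+5-3 = 6$ vertices and $6+10-3 = 13$ edges, giving the same bound as the non--induced $\diamond_2$ calculation. The first--moment then terminates via Markov.

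Next, (4) follows immediately from Malen's $d$--collapsibility theorem (cited as the motivating result of \cite{Malen}) applied with $d=3$, since $\alpha > 12/25 > 1/3$. Property (5) is just a restatement of Lemma~\ref{CFH61} modulo replacing ``edges'' by ``vertices'': a subcomplex with at most $L'$ vertices has at most $\binom{L'}{2}$ edges, and conversely a subcomplex with at most $L$ edges has at most $2L$ non--isolated vertices, so the two phrasings are equivalent up to renaming the constant, and Lemma~\ref{CFH61} applies because $\alpha > 1/3$.

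Finally, for (6), fix $\ell$ and for each $v \in \{1, \ldots, \ell\}$ bound the expected number of subgraphs of $G \sim G(n,p)$ on $v$ vertices with at least $\lceil 25v/12 \rceil$ edges by
\[
\binom{n}{v}\binom{\binom{v}{2}}{\lceil 25v/12 \rceil} p^{\lceil 25v/12 \rceil} \;\le\; C_v \, n^{v\left(1 - \frac{25\alpha}{12}\right)},
\]
for some constant $C_v$ depending only on $v$. Since $\alpha > 12/25$, the exponent is strictly negative for every $v \ge 1$, and summing the $\ell$ terms still yields $o(1)$. Taking a union bound across all six properties concludes the lemma. There is no real obstacle here beyond bookkeeping; the only subtlety is checking that the single hypothesis $\alpha > 12/25$ actually dominates each of the separate thresholds $2/5,\; 6/13,\; 7/15,\; 1/3,\;$ and $12/25$ that arise, which it does since $12/25$ is the largest.
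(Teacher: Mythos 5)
Your proof is correct and follows essentially the same route as the paper: routine first-moment bounds for (1), (2), (3), (6), Malen's collapsibility theorem for (4), and Lemma~\ref{CFH61} for (5), all unified by the single threshold $\alpha > 12/25$. Two cosmetic differences worth noting: the paper handles (1) by observing that (3) implies (1) (a $6$-clique contains a tetrahedron and a $4$-simplex meeting at a triangle), whereas you count $6$-cliques directly, which works just as well; and you explicitly address the translation between the ``at most $L$ edges'' phrasing of Lemma~\ref{CFH61} and the ``at most $L$ vertices'' phrasing of property (5), a point the paper passes over in silence, so your remark is a small improvement in rigor.
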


In establishing that $11/30 < \alpha < 1/2$ $X \sim X(n, n^{-\alpha})$ has fundamental group of cohomological dimension 2 in \cite{CostaFarberHorak}, Costa, Farber, and Horak show that there is a subcomplex $Y \subseteq X^{(2)}$ so that every bounded subcomplex of $Y$ collapses to a graph and so that $\pi_1(Y) = \pi_1(X)$. Our approach will be to show that for $\alpha > 12/25$, $p < n^{-\alpha}$, the complex $Y \subseteq X^{(2)}$ obtained from $X \sim X(n, p)$ by collapsing away everything of dimension larger than 2 from $\widetilde{X}$ has $\pi_1(Y) = \pi_1(\widetilde{X}) =  \pi_1(X)$ and every finite subcomplex of $Y$ is aspherical. While it may be possible to show that every bounded subcomplex of this complex $Y$ collapses to a graph by adapting the approach in \cite{CostaFarberHorak}, since we only need that every bounded subcomplex is aspherical we take a different approach based on simple homotopy equivalence.

Simple homotopy equivalence was first described by Whitehead, and is based on elementary collapses and expansions. We've already defined an elementary collapse, and an elementary expansion is simply the reverse of an elementary collapse. That is, if $X$ is a simplicial complex and $\sigma$ is a simplex on a subset of the vertices of $X$ so that exactly one facet $\tau$ of $\partial \sigma$ does not belong to $X$ then the \emph{elementary expansion of $X$ at $\sigma$} is adding $\tau$ and $\sigma$ to $X$. Observe that an elementary expansion is a homotopy equivalence. Two simplicial complexes are \emph{simple homotopy equivalent} provided that there is a sequence of collapses and expansions that transform one into the other.

To prove Theorem \ref{almostaspherical} we show that if $X$ is a flag complex that satisfies the conditions of Lemma \ref{additionalproperties} we can remove a triangle from each copy of $\diamond_2$ in $X$ and then collapse to a 2-complex $Y$ with the property that any $Z \subseteq Y$ on at most $L$ vertices is simple homotopy equivalent to a graph, and is therefore aspherical. In order to prove this we first establish the following deterministic result that is similar to Lemma \ref{simplecores}.

\begin{lemma}\label{Zlemma}
If $Z$ is a simplicial complex with the property that the only triangulated 2-spheres contained in $Z^{(2)}$ are the boundaries of tetrahedra belonging to $Z$ and if the essential density of the graph of $Z$ satisfies
\[\rho(Z^{(1)}) < 25/12,\]
then $Z$ is simple homotopy equivalent to a graph. 
\end{lemma}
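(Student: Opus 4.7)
My plan is to proceed by induction on $Z$, adapting the minimum-degree analysis from the proof of Lemma~\ref{simplecores} to this non-flag setting. Let $Z$ be a minimal counterexample: a complex of minimal face count that satisfies both hypotheses but is not simple homotopy equivalent to a graph.

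I would first use the hypothesis on 2-spheres to derive two structural restrictions on $Z$. Any 4-simplex in $Z$ would force its 2-skeleton to contain a six-triangle bipyramid on five vertices as a 2-sphere subcomplex, violating the hypothesis; hence $\dim Z \leq 3$. Similarly, any two tetrahedra of $Z$ sharing a triangle would produce a bipyramid 2-sphere on five vertices (the six triangles of the union of their boundaries with the shared triangle removed), again violating the hypothesis; hence each triangle of $Z$ lies in at most one tetrahedron. It follows that every tetrahedron of $Z$ has all four of its triangular facets free, so I may perform an elementary collapse of each tetrahedron through any such triangle. After these tetrahedral collapses I pass to a 2-dimensional complex $Z_2$ with the same 1-skeleton as $Z$; since $Z$ collapses to $Z_2$, it suffices to show that $Z_2$ is simple homotopy equivalent to a graph.

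The heart of the argument is then a minimum-degree analysis for $Z_2$ patterned on the proof of Lemma~\ref{simplecores}. The density bound $\rho(Z^{(1)}) < 25/12$ implies that some vertex $u$ has $\deg(u) \leq 4$. When $\lk_{Z_2}(u)$ is 1-collapsible I 2-collapse around $u$ and apply the induction hypothesis to $Z_2 \setminus \{u\}$, which continues to satisfy both hypotheses. Otherwise $u$ has a small obstructed link (for example, when $\deg(u)=3$ the link is forced to be an empty triangle on neighbors $v_1,v_2,v_3$), and in each such subcase I would perform an elementary expansion that introduces the missing triangle $\{v_1,v_2,v_3\}$ together with the tetrahedron $\{u,v_1,v_2,v_3\}$ as its coface, followed by a collapse sequence eliminating $u$ and its star. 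Following the analog of Claim~\ref{claim:bigneighbor}, any minimal counterexample then forces every degree-4 vertex of $Z$ to have a neighbor of degree strictly greater than $4$, from which the same degree-sum computation as in Lemma~\ref{simplecores} yields $|E(Z^{(1)})| \geq 25|V(Z)|/12$, contradicting the density assumption.

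The main obstacle is verifying that each expansion move preserves the 2-sphere hypothesis so that the induction goes through. The critical computation is a bipyramid-style obstruction: if filling in the triangle $\{v_1,v_2,v_3\}$ created a new triangulated 2-sphere $S$ in the resulting 2-skeleton, then $S$ must contain $\{v_1,v_2,v_3\}$, and replacing this triangle within $S$ by the disk formed by the three star triangles $\{u,v_1,v_2\}, \{u,v_1,v_3\}, \{u,v_2,v_3\}$ produces a 2-sphere $S'$ inside the original $Z^{(2)}$ on at least five vertices. Since $S'$ cannot be a tetrahedral boundary, this contradicts the hypothesis on $Z$, ruling out such a new 2-sphere and validating the expansion. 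Propagating this bipyramid argument cleanly through every low-degree link configuration, and in particular through the degree-4 subcases that correspond to the analog of Claim~\ref{claim:bigneighbor}, is the most delicate part of the proof.
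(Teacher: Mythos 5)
Your outline follows the same backbone as the paper's proof: take a minimal counterexample, use $\rho(Z^{(1)}) < 25/12$ to force a vertex $u$ of degree at most $4$, fix an obstructed link by an elementary expansion followed by a $2$-collapse around $u$, run an analog of Claim~\ref{claim:bigneighbor} to show every degree-$4$ vertex has a neighbor of degree $>4$, and derive the degree-sum contradiction. Your opening reduction — that the sphere hypothesis forces $\dim Z \le 3$ (a $4$-simplex contains a $5$-vertex bipyramid in its $2$-skeleton) and forbids two tetrahedra sharing a triangle, so one may collapse away all tetrahedra and work with a $2$-complex $Z_2$ containing \emph{no} embedded $2$-spheres at all — is a genuine simplification that the paper does not make; the paper instead works with $Z$ directly and shows that expanding a vertex link to its flag closure inside $\overline{Z}$ (which can have dimension up to $4$) preserves both hypotheses on $Z'\setminus\{v\}$. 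Your reduction streamlines the hypothesis preservation to simply checking that $(Z_2 \setminus \{u\}) \cup \{\text{added triangles}\}$ still has no $2$-spheres.

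That said, there are real gaps, of which the second is serious. First, the bipyramid replacement silently assumes $u \notin V(S)$: if $u$ already lies on the new $2$-sphere $S$, coning $\partial\{v_1,v_2,v_3\}$ at $u$ need not produce a sphere. You should handle this case separately; in fact when $u\in V(S)$ one is forced to $S = \partial\{u,v_1,v_2,v_3\}$, the boundary of the tetrahedron just introduced, which is a permitted sphere and disappears again after the collapse around $u$. Relatedly, you should justify that the expansion is legal in the first place: $\{v_1,v_2,v_3\} \notin Z_2$, which follows because otherwise $\partial\{u,v_1,v_2,v_3\}$ would already be an embedded sphere in $Z_2$. Second, the degree-$4$ analysis is not a routine ``propagate the bipyramid argument through the subcases.'' When the $2$-core of $\lk_{Z_2}(u)$ is $C_4$ there is no triangle to fill, so the expansion strategy is unavailable, and the whole Claim~\ref{claim:bigneighbor}-style argument must be reproved in the non-flag setting, where a link edge between two neighbors of $u$ witnesses a triangle of $Z_2$ rather than a mere adjacency in $Z_2^{(1)}$ — so the degree bookkeeping of the flag case does not carry over verbatim. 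The paper also flags a separate ``free edge'' alternative here (the case where some triangle $\{u,w_i,w_j\}$ is the unique coface of $\{w_i,w_j\}$), which your sketch omits. And when $\lk_{Z_2}(u)$ has $2$-core $K_4$ or $K_4 - e$, several triangles — and in the $K_4$ case a tetrahedron, temporarily pushing the complex to dimension $4$ — must be filled in before the link becomes $1$-collapsible, which is again more than a single triangle expansion. These are exactly the pieces the paper organizes by expanding $\lk_Z(v)$ all the way to its flag closure before attempting to collapse; as written, your proposal acknowledges the delicacy but does not supply the missing argument.
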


For brevity we say that a simplical complex whose only embedded 2-spheres are boundaries of included tetrahedra is \emph{essentially 2-sphere free}. The fact that $Z$ is essentially 2-sphere free will imply that, even though $Z$ may not be a flag complex there is a sequence of expansions that transform any the link of any vertex $v$ to a flag complex while maintaining the essentially 2-sphere free property and density property on $Z \setminus \{v\}$. So if $Z$ has a vertex $v$ with $\overline{\lk_Z(v)}$ 1-collapsible, then $Z$ cannot be a minimal counterexample. From there we can basically follow the argument of the proof of Lemma \ref{simplecores} and afterwards immediately prove Theorem \ref{almostaspherical}.

\begin{proof}[Proof of Lemma \ref{Zlemma}]
Suppose that $Z$ is essentially 2-sphere free with the density condition. Two immediate observations for the density condition are that $Z$ does not contain the 1-skeleton of a 2-sphere on more that 6 vertices and that $\dim(\overline{Z}) \leq 4$. By Euler characteristic a triangulated 2-sphere on $v$ vertices has density $3 - 6/v$, and $3 - 6v \geq 25/12$ for $v \geq 7$. The dimension of $\overline{Z}$ is at most 4 because $Z^{(1)}$ cannot contain a 6-clique as a 6-clique has density $15/6 > 25/12$.

Let $v$ be a vertex of $Z$, we claim that there exists a sequence of expansions each adding a pair of faces $(\sigma, \sigma \cup \{v\})$, so that the resulting complex $Z'$ has $\lk_{Z'}(v)$ is the flag closure $\overline{\lk_Z(v)}$ and $Z' \setminus \{v\}$ still satisfies the essentially 2-sphere free and density assumptions of $Z$. 

Since $\dim(\overline{Z}) \leq 4$, $\dim(\overline{\lk_Z(v)}) \leq 3$, thus if we can show that there is a sequence of expansions first filling in all the triangles of $\lk_Z(v)$ and then from there filling in all empty tetrahedra, that will be sufficient for $Z'$ so that $\lk_{Z'}(v) = \overline{\lk_Z(v)}$, and all that will remain is to check that $Z' \setminus \{v\}$ still satisfies the assumptions from the statement. Suppose that $\partial \sigma$ is an empty triangle in $\lk_Z(v)$. If $\sigma \notin Z$ then $(\sigma, \sigma \cup \{v\})$ is a valid expansion filling in $\partial \sigma$ in the link. However if $\partial \sigma$ is an empty triangle in $\lk_Z(v)$ and $\sigma \in Z$ then we have the boundary of $\sigma \cup \{v\}$ in $Z$ without $\sigma \cup \{v\}$ in $Z$. But this is a triangulated 2-sphere that isn't the boundary of a tetrahedron in $Z$, so this breaks the essentially 2-sphere free condition. Thus $\sigma \notin Z$. Therefore it is possible to fill in the triangles of the link of $v$. 

Once the triangles are filled in we move on to tetrahedra. Empty tetrahedera in the link of $v$ after expanding to fill in all the triangles split into two types. On one hand we have empty tetrahedra that contain at least one triangle added by an expansion and on the other we have empty tetrahedra initially present in $\lk_Z(v)$. For the former case suppose that $\partial \tau$ is an empty tetrahedron in the link of $v$ with $\sigma$ a triangle of $\tau$ added by an expansion. Since $\sigma$ was added by an expansion, the only tetrahedron that contains $\sigma$ is $\sigma \cup \{v\}$, so in particular $\tau$ is not in the complex, so $(\tau, \tau \cup \{v\})$ is an allowed expansion that fills in $\partial \tau$ in the link. If $\partial \tau$ was already in $\lk_Z(v)$ as an empty tetrahedron, then $Z$ being essentially 2-sphere free implies that $\tau$ is not in $Z$. Indeed if $\tau$ belongs to $Z$ and $\partial \tau \in \lk_Z(v)$, then $\partial (\tau \cup \{v\})$ belongs to $Z$, but the boundary of a 4-simplex contains a 2-sphere on 5 vertices in its 2-skeleton. It follows that $\tau \notin Z$ and so $(\tau, \tau \cup \{v\})$ is a permitted expansion. Thus it is possible to expand $Z$ to $Z'$ where $\lk_{Z'}(v) = \overline{\lk_Z(v)}$. 

We claim next that $Z' \setminus \{v\}$ is essentially 2-sphere free. As the expansion moves from $Z$ to $Z'$ never added any edges, the graph of $Z'$ is a subgraph of the graph of $Z$. Thus $Z'$ cannot contain a sphere on more than 6 vertices. If $S \subseteq Z' \setminus \{v\}$ is a triangulated 2-sphere on 6 vertices then at least one triangle $\sigma$ of $S$ belongs to $\lk_{Z'}(v)$. In this case then $\partial \sigma \in \lk_{Z}(v)$, but then replacing $\sigma$ in $S$ by the cone over its boundary with cone point $v$ finds the 1-skeleton of a sphere on 7 vertices in $Z$, but we've already ruled that out. 

Next suppose that $S \subseteq Z' \setminus \{v\}$ is a sphere on 5 vertices. Then some nonempty subset $\{\sigma_1, ..., \sigma_t\}$ of the triangles of $S$ belong to $\lk_{Z'}(v)$. Thus all the edges of the complex $Y$ generated by $\sigma_1, ..., \sigma_t$ belong to $\lk_Z(v)$. If $Y$ contains a vertex of degree at least 4 then $Z$ contains the graph of a 2-sphere $S$ on 5 vertices with an additional vertex adjacent to all vertices of $S$. Such a graph has 14 edges and 6 vertices and $14/6 > 25/12$ breaking the density condition. Thus $Y$ must be a subcomplex of $S$ in which all vertices have degree at most 3. But then $Y$ is either a single triangle, two triangles meeting at an edge, or a tetrahedron with a face removed. In any of these case $Y^{(1)}$ belongs to $\lk_Z(v)$ and $Y$ is a disk. Thus by replacing $Y$ with the cone over its boundary with cone point $v$ we find a copy of a sphere on 5 or 6 vertices in $Z$. But $Z$ is essentially 2-sphere free.

Lastly suppose that $Z'$ contains an empty tetrahedron $S$. Again at least one triangle of $S$ belongs to $\lk_{Z'}(v)$. Let $Y$ be the subcomplex of $S$ generated by the triangles of $S$ in $\lk_{Z'}(v)$. If $Y$ is on at most 2 triangles, the $Y$ is a disk with no internal vertices and the boundary of that disk belongs to $\lk_Z(v)$, so we can find a sphere on 5 vertices in $Z$. On the other hand if $Y$ is on at least 3 triangles then all edges of $S$ belong to $\lk_{Z'}(v)$ and therefore to $\lk_Z(v)$. But after expansion $\lk_{Z'}(v)$ is a flag complex. Therefore $S$ bounds an included tetrahedron of $Z'$. 

Now suppose that $Z$ is a minimal counterexample to our lemma. If $Z$ has a vertex $v$ so that $\overline{\lk_Z(v)}$ is 1-collapsible then we may expand $Z$ to take the flag closure of the link and obtain $Z'$. Now $Z'$ is simple homotopy equivalent to $Z$ and $\lk_{Z'}(v)$ is 1-collapsible. Thus we may 2-collapse $Z'$ around $v$ and obtain a new complex $Z''$ simple homotopy equivalent to $Z'$ where $v$ does not belong to any triangles of $Z''$. Thus $Z''$ is $Z' \setminus \{v\}$ together with $v$ and edges from $v$ to $Z' \setminus \{v\}$. Moreover $Z' \setminus \{v\}$ is essentially 2-sphere free and satisfies the essential density bound. Thus by minimality of $Z$ there is a sequence of collapses and expansions in $Z' \setminus \{v\}$ that transform it to a graph. But then $Z''$ is simple homotopy equivalent to a graph, so $Z$ is too. It follows that $Z$ does not have a vertex $v$ so that $\overline{\lk_Z(v)}$ is 1-collapsible.

We can now conclude that the minimum degree of $Z^{(1)}$ is 4. Moreover if $Z$ has a vertex of degree 4 whose neighbors all have degree 4 too, then $Z$ either contains a free edge so we can perform an elementary collapse and then have a smaller counterexample, or else $Z$ contains a copy of $\diamond_2$ as in the proof of Claim \ref{claim:bigneighbor} violating the essentially 2-sphere free condition. Thus each vertex of $Z$ of degree 4 has a neighbor of degree larger than 4, and just as in the proof of Lemma \ref{simplecores}, we have that $Z$ has density at least 25/12. 
\end{proof}

\begin{proof}[Proof of Theorem \ref{almostaspherical}]
For $p < n^{-\alpha}$ with $\alpha > 12/25$, with high probability $X$ satisfies all of the conditions of Lemma \ref{additionalproperties} with $\ell = L$. Let $Y$ be obtained from $X$ by deleting a triangle from each copy of $\diamond_2$ and 3-collapsing. Clearly $Y$ is 2-dimensional and $\pi_1(Y) = \pi_1(X)$, we prove that $Y$ is aspherical. We first claim that $Y^{(2)}$ cannot contain a 2-sphere on at most $2L$ vertices.

The 1-skeleton of a a triangulated sphere on $v \geq 4$ vertices has density $3 - 6/v$. By condition (6) then $X$ cannot contain a triangulated sphere on more than 6 vertices and at most $2L$ vertices. Thus $Y$ cannot contain triangulated sphere of $v$ vertices for $7 \leq v \leq 2L$, so it remains to check spheres on at most 6 vertices. The only triangulated spheres on at most 6 vertices are $\diamond_2$ and \emph{stacked polytopes}, i.e. spheres that can be obtained from $\partial \Delta_3$ by successively replacing a triangle by the cone over its boundary (commonly called a \emph{bistellar flip}). By construction $Y$ does not contain any copies of $\diamond_2$. 

If $Y$ contains $\partial \Delta_3$ then this means that the interior was removed by collapsing it into a 4-dimensional simplex. Let $\sigma$ be such a tetrahedron with $\sigma \subseteq \tau$ so that $(\sigma, \tau)$ was an elementary collapse. By condition (3) however, we have that $X$ does not contain two 4-simplices that meet at a tetrahedron. Thus after collapsing away all the 4-simplices every tetrahedron of $\tau$ other than $\sigma$ still remains. So we have a triangulated 3-sphere with a single tetrahedron removed. Now when we collapse away triangle--tetrahedron pairs, the only way to get started on collapsing $\tau \setminus \sigma$ is to collapse away one of the triangles of $\sigma$, but then $\partial \sigma$ does not belong to $Y$. 

If $Y$ contains a triangulated sphere on 5 vertices then $X$ has two tetrahedra $\sigma_1$, $\sigma_2$ meeting at a triangle so that after collapse the boundary of the ball with facets $\sigma_1$ and $\sigma_2$ remains. Thus one of $\sigma_1$ or $\sigma_2$ must be collapsed away by a tetrahedron--4-simplex collapse. If not then the only way to get started on collapsing $\sigma_1$ and $\sigma_2$ is to collapse at one of the triangles on the boundary which contradicts the boundary remaining in $Y$. By (3) though, in this case $\sigma_1 \cup \sigma_2$ must form the vertices of a 4-simplex. Without loss of generality then $\sigma_1$ is collapsed with this 4-simplex and the sequence of triangle--tetrahedron collapses restricted to the remaining punctured 3-sphere leaves behind a 2-complex that contains a triangulated sphere. But this is a homotopy equivalence between a contractible space and a 2-complex with homology in degree 2, and that's impossible. 

Lastly suppose that $Y$ contains the boundary of three tetrahedra $\sigma_1, \sigma_2, \sigma_3$ so that $\sigma_1$ and $\sigma_2$ meet at a triangle and $\sigma_2$ and $\sigma_3$ meet at a triangle but $\sigma_1$ and $\sigma_3$ don't meet at a triangle. By (3) then none of the $\sigma_i$ belong to a 4-simplex, so they are all collapsed away by triangle--tetrahedron elementary collapses, and to get started we have to use a triangle on the boundary. So having exhausted all cases we've proved $Y$ does not contain a sphere on at most $2L$ vertices. 

To show that $Y$ is aspherical it suffices by (5) to prove that for any $Z \subseteq Y$ on at most $L$ vertices, $Z$ is aspherical. Suppose that $Z$ is such a complex. By (6) then the graph of $Z$ has essential density less than 25/12. Additionally since $Y$ does not contain a triangulated 2-sphere, $Z$ is essentially 2-sphere free, so by Lemma \ref{Zlemma}, $Z$ is simple homotopy equivalent to a graph, and hence $Z$ is aspherical, and this completes the proof. 
\end{proof}

Finally we prove Lemma \ref{additionalproperties}.
\begin{proof}[Proof of Lemma \ref{additionalproperties}]
We observe that (3) implies (1). Let $G_1$ be the graph of a 5-clique together with a vertex $v$ adjacent exactly to a triangle of the 5-clique. Then $G_1$ has 6 vertices and 13 edges, so the expected number of copies of $G_1$ in $G \sim G(n, p)$ is 
\[O(n^{6 - 13 (12/25)}) = o(1).\]
So by Markov's inequality (3) and therefore (1) occur with high probability. 

For (2), let $G_2$ be the graph obtained from the graph of $\diamond_2$ by adding a new vertex $v$ adjacent exactly to a triangle of $\diamond_2$. Then $G_2$ has 7 vertices and 15 edges so the expected number of such complexes is $O(n^{7 - 15 \alpha})$ which is $o(1)$ since $\alpha > 12/25 > 7/15$. So no copy of $\diamond_2$ has a triangle intersecting a tetrahedron with a vertex outside of that copy of $\diamond_2$. The only other way a copy of $\diamond_2$ can contain a nonmaximal triangle is if it isn't induced. The expected number of noninduced copies of $\diamond_2$ is $O(n^{6 - 13 \alpha}) = o(1)$ since $\alpha > 12/25 > 6/13$.

Condition (4) is by the main result of \cite{Malen} that $\alpha > 1/d$ implies that $X \sim X(n, n^{-\alpha})$ is $d$-collapsible. It is also implied by the $d = 3$ case of Theorem \ref{realmaintheorem}.

Condition (5) is simply a restatement of Lemma \ref{CFH61} of \cite{CostaFarberHorak}.

Condition (6) is by a standard first moment argument. A graph $H$ with $k$ vertices density at least $25/12$ occurs in $G(n, p)$ with probability $O((n^{1 - \alpha (25/12)})^k)$ and as $1 - \alpha(25/12) < 0$ we can sum over all the finitely many graphs on at most $\ell$ vertices with density at least $25/12$ to get a $o(1)$ bound on the expected number of included subgraphs. 
\end{proof}

\section{Concluding remarks}
The natural remaining questions are what are the sharp thresholds, should they exist, for:
\begin{itemize}
\item $X \sim X(n, p)$ to go from almost $d$-collapsible to not almost $d$-collapsible,
\item $X \sim X(n, p)$ to go from having all homology in degree $d$ generated by copies of $\diamond_d$ to having homology not generated by copies of $\diamond_d$, and
\item $X \sim X(n, p)$ to go from $\pi_1(X)$ free to $\pi_1(X)$ nonfree.
\end{itemize}

For the Linial--Meshulam model $Y_d(n, p)$ the ``trivial $d$-cycles" are boundaries of a $(d + 1)$-simplex. These are Poisson distributed and \cite{ALLM}, \cite{AL2} establish the constant $\gamma_d$ so that $\frac{\gamma_d}{n}$ is the sharp threshold for $Y \sim Y_d(n, p)$ to go from almost $d$-collapsible (replacing $\diamond_d$ in the definition here by $\partial \Delta_{d + 1}$) to not almost $d$-collapsible. The threshold in $Y \sim Y_d(n, p)$ to go from all homology generated by copies of $\partial \Delta_{d + 1}$ to some homology coming from other subcomplexes is at $c_d/n$ for an explicit constant $c_d$ is established in \cite{AL}, \cite{LP}. The proofs rely on approximating the local behavior of $Y_d(n, p)$ by a Galton--Watson process and counting $d$-dimensional cores. Based on what's been established, and letting $\gamma_d$ and $c_d$ refer to the critical constants in $Y_d(n, p)$ we make the following conjecture. 

\begin{conjecture}\label{conjecturedthreshold}
For $X \sim X(n, c/\sqrt[d]{n})$ if
\begin{itemize}
\item $0 < c < \sqrt[d]{\gamma_d}$ then with high probability $X$ is almost $d$-collapsible.
\item $\sqrt[d]{\gamma_d} < c < \sqrt[d]{c_d}$ then with high probability $X$ is not almost $d$-collapsible, but $\beta_d(X; \R)$ is generated by embedded copies of $\diamond_d$.
\item $\sqrt[d]{c_d} < c$ then with high probability $\beta_d(X; \kk) = \Theta(n^{(d + 1)/2})$ for any coefficient field $\kk$.
\end{itemize}
\end{conjecture}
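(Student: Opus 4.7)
The plan is to transport the Galton--Watson arguments developed for the Linial--Meshulam model in \cite{ALLM, AL2, AL, LP} to the random flag complex, exploiting the following local-structure coincidence: the number of $d$-dimensional cofaces of a fixed $(d-1)$-face in $X(n, c/\sqrt[d]{n})$ is asymptotically Poisson with mean $c^d$, exactly matching the distribution of cofaces of a $(d-1)$-face in $Y_d(n, c^d/n)$. This is precisely why the critical constants should scale as $\sqrt[d]{\gamma_d}$ and $\sqrt[d]{c_d}$. The first step is to formalise this coupling on $O(\log n)$-sized neighbourhoods in the dual graph $\mathcal{G}(X^{(d)})$, extending Lemma \ref{largedegreecores} so that local neighbourhoods of $d$-faces in $X$ can be replaced, up to $o(1)$ error, by the Galton--Watson tree used in \cite{AL2, LP}.

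For the first bullet, the approach is to strengthen Lemma \ref{simplecores} by replacing its crude density bound with the sharp branching-process threshold: one would classify $d$-strongly-connected flag complexes whose link-exploration is subcritical and show that all such complexes admit an almost $d$-collapse, then combine this classification with a sharpening of Lemma \ref{firstmomentlargecore} that pushes the enumeration up to the true constant $\sqrt[d]{\gamma_d}$, and conclude via Lemma \ref{partitionlemma2}. For the second bullet the plan is to follow the strategy of \cite{LP}: in the window $\sqrt[d]{\gamma_d} < c < \sqrt[d]{c_d}$ the exploration is supercritical so non-collapsible cores appear, but one would show that after excising the copies of $\diamond_d$ the residual core is $\R$-acyclic, using an analogue of the inside-out shelling of \cite{LP}. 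For the third bullet, once $c > \sqrt[d]{c_d}$ the lower bound $\beta_d(X; \kk) = \Theta(n^{(d+1)/2})$ should follow from the Euler characteristic estimate $\beta_d \geq (-1)^d \chi(X) - \sum_{i \neq d} \beta_i$, combined with \cite{KahleRandomClique}-type vanishing bounds on $\beta_i$ for $i \neq d$ in this range and concentration of the $f$-vector; the matching upper bound is trivial from $\beta_d \leq f_d$.

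The main obstacle, shared by all three parts, is that $X(n, c/\sqrt[d]{n})$ has dimension strictly larger than $d$, whereas the branching-process and core arguments in \cite{ALLM, AL2, AL, LP} crucially rely on $Y_d(n, p)$ being purely $d$-dimensional. As already illustrated in Section \ref{sec:overview} by the fact that the boundary of a $(d+1)$-simplex is a $d$-core, higher-dimensional faces in $X$ can produce $d$-cores that the Linial--Meshulam heuristic does not see. The technical heart of the conjecture is therefore to promote Lemma \ref{partitionlemma} to quantitative local statements and to show that strongly connected components of $X^{(d)}$ sitting inside higher-dimensional faces are still controlled by the \emph{same} critical constants $\gamma_d$ and $c_d$; this likely requires a new coupling between $X^{(d)}$ and a suitably modified Linial--Meshulam model, which we do not currently know how to carry out.
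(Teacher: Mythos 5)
The statement you are addressing is Conjecture~\ref{conjecturedthreshold}, which the paper explicitly leaves open: Section~7 (``Concluding remarks'') presents it as a conjecture motivated by analogy with the Linial--Meshulam results of \cite{ALLM, AL2, AL, LP}, and no proof appears anywhere in the paper. So there is no ``paper's own proof'' to compare against. Your proposal is really a research plan rather than a proof, and you say so honestly; you also correctly identify the principal obstruction, which is exactly the one the paper flags in Section~\ref{sec:overview}: $X(n, c/\sqrt[d]{n})$ is not purely $d$-dimensional, so $d$-cores such as $\partial \Delta_{d+1}$ arise from higher-dimensional faces and the branching-process / core-enumeration arguments of \cite{ALLM, AL2, AL, LP}, which crucially use the pure $d$-dimensionality of $Y_d(n,p)$, do not transfer directly. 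Your local-coupling idea (that the coface distribution of a $(d-1)$-face in $X(n, c/\sqrt[d]{n})$ matches that of $Y_d(n, c^d/n)$) is precisely the heuristic that makes the constants $\sqrt[d]{\gamma_d}$ and $\sqrt[d]{c_d}$ the natural guesses, and the paper's own motivation is the same, so your plan is consistent with the author's intent but not a resolution.

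One additional gap worth flagging in your third bullet: the Morse-inequality bound $\beta_d \geq (-1)^d\chi(X) - \sum_{i\neq d}\beta_i$ together with concentration of the $f$-vector only yields a $\Theta(n^{(d+1)/2})$ lower bound when the alternating sum of expected face numbers near dimension $d$ is positive, and a short computation (the same one appearing in \cite{KahleRational} and in the proof of Theorem~\ref{supercriticalpi1} for $d=2$) shows this requires $c^d > d+1$. Since $c_d$ is strictly less than $d+1$ (e.g.\ $c_2 \approx 2.754 < 3$), there is a genuine window $\sqrt[d]{c_d} < c < \sqrt[d]{d+1}$ in which the Euler characteristic argument is simply insufficient; to establish the conjectured order of $\beta_d$ there one would need the finer spectral and core-size analysis of \cite{AL}, not just $f$-vector concentration. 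This is exactly the same subtlety that makes $c_d < d+1$ in the Linial--Meshulam setting, and it means your third-bullet plan, even granting the higher-dimensional-face obstacle is resolved, is incomplete near the conjectured threshold.
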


The constants $c_d$ and $\gamma_d$ are described fully in \cite{LP}, where the following table of the approximation of the first few values also appears: \\
\begin{center}
\begin{tabular}{c | c | c | c | c }
$d$ & 2 & 3 & 4 & 5 \\ \hline
$\gamma_d$ & 2.455 & 3.089 & 3.509 & 3.822 \\
$c_d$ & 2.754 & 3.907 & 4.962 & 5.984 \\
\end{tabular}
\end{center}
Asymptotically $\gamma_d$ is order $\log(d)$ and $c_d$ is very slightly smaller than $d + 1$. 

Regarding the fundamental group, \cite{FreeGroup} proves the following for $Y \sim Y_2(n, p)$. These are still the best known bounds on a sharp threshold for $\pi_1(Y_2(n, p))$ to be free.

\begin{theorem}\cite{FreeGroup}
If $c < \gamma_2$ and $Y \sim Y_2(n, c/n)$ then with high probability $\pi_1(Y)$ is free, while if $c > c_2$ and $Y \sim Y_2(n, c/n)$ then with high probability $\pi_1(Y)$ is not free.
\end{theorem}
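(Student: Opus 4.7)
My proposal is to split the argument along the two asymmetric thresholds. For the subcritical half ($c < \gamma_2$), the plan is to invoke the main result of Aronshtam--Linial--{\L}uczak--Meshulam \cite{ALLM}: with high probability $Y \sim Y_2(n, c/n)$ is almost $2$-collapsible in the Linial--Meshulam sense, meaning there is an elementary-collapse sequence reducing $Y$ to a complex whose pure $2$-dimensional part is a face-disjoint union of tetrahedron boundaries $\partial \Delta_3$. (This is precisely what is encoded in the asymptotic collapsibility probability $\exp(-c^4/24)$ reported in Section~\ref{sec:background}.) From here the argument of Lemma~\ref{almost2collapsible} transfers verbatim with $\diamond_2$ replaced by $\partial \Delta_3$: remove one triangle from each surviving $\partial \Delta_3$, which preserves $\pi_1$ because its boundary is nullhomotopic in the complementary disk, and then collapse the result onto a graph, whose fundamental group is free.

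For the supercritical half ($c > c_2$), the plan relies on \cite{AL} and \cite{LP}: with high probability $\beta_2(Y;\R) = \Theta(n)$, whereas the embedded copies of $\partial \Delta_3$ are Poisson-distributed and so contribute only $O(1)$ to $H_2$, so essentially all of $H_2$ is carried by the Linial--Peled giant shadow $C \subseteq Y$. I would argue by contradiction: if $\pi_1(Y)$ were free of rank $r$, then since $Y$ is a finite $2$-complex, the classical theorem of C.T.C.~Wall (free $\pi_1$ for a $2$-complex forces $Y \simeq \bigvee_r S^1 \vee \bigvee_s S^2$ with $s = \beta_2(Y)$) would make the Hurewicz map $\pi_2(Y) \to H_2(Y;\Z)$ surjective, so every homology class would be spherical. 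The remaining task is to exhibit a non-spherical $2$-cycle inside $C$. I would do this by using the branching-process / local analysis of \cite{AL,LP} to extract a bounded-size subcomplex $K \subseteq C$ that is a simplicial pseudomanifold of positive genus (or at least contains an incompressible positive-genus surface), and then verifying that its fundamental class survives in $H_2(Y)$ via a local linking/parity argument that rules out any decomposition into $\partial\Delta_3$ summands (for instance by exhibiting edges of $C$ whose star in $Y$ contains no tetrahedron).

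The main obstacle is this last step: converting the probabilistic/combinatorial information that the giant shadow is large and locally non-tree-like into the topological fact that it supports a genuinely non-spherical $H_2$ class. The reduction to a question about spherical cycles is comparatively formal (given Wall's theorem and the Hopf formula), but producing an explicit non-spherical cycle through a first-moment count of high-genus subcomplexes requires carefully controlling the local structure of the shadow, and this is almost certainly where the bulk of the work in \cite{FreeGroup} lies.
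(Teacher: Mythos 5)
The statement in question is only \emph{cited} from \cite{FreeGroup}; the present paper gives no proof of it, so the meaningful comparison is with the paper's own proof of the analogous flag-complex result, Theorem~\ref{supercriticalpi1}, together with the remarks in Section~\ref{sec:background} about the Linial--Meshulam regime.

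Your subcritical half is sound and is exactly the transplant the paper's concluding remarks point toward: ALLM/AL2 give that for $c<\gamma_2$ the only obstructions to $2$-collapse are the $\partial\Delta_3$'s, so $Y$ collapses to something whose pure $2$-part is a face-disjoint union of $\partial\Delta_3$'s, and then the argument of Lemma~\ref{almost2collapsible} (remove one triangle from each, the removed boundary is nullhomotopic in the remaining disk, collapse to a graph) yields free $\pi_1$.

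Your supercritical half has a real gap, and it is not a minor one. The Wall-type reduction is fine: a finite $2$-complex with free $\pi_1$ is homotopy equivalent to a wedge of circles and $2$-spheres, so the Hurewicz map $\pi_2(Y)\to H_2(Y)$ must be onto. But the plan to then exhibit a non-spherical class by finding a \emph{bounded-size} positive-genus subcomplex inside the shadow runs into two problems. First, above $c_2$ the non-$\partial\Delta_3$ cycles are precisely the ones that are \emph{not} supported on bounded subcomplexes; the core-counting in \cite{AL,LP} shows bounded cores in this regime are only $\partial\Delta_3$'s. Second, even if you had a small positive-genus pseudomanifold $K\subseteq Y$ with nontrivial fundamental class, that would not show the class is non-spherical \emph{in $Y$}: sphericity is a property of the ambient complex, and a class carried by a high-genus surface in a subcomplex can still be in the image of $\pi_2(Y)\to H_2(Y)$. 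Your ``linking/parity'' step is where this would need to be argued, and as stated it does not close the gap.

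The route the paper actually takes for Theorem~\ref{supercriticalpi1} (and the one the cited asphericity result of \cite{CF} is designed to enable in the Linial--Meshulam setting) sidesteps the issue entirely: remove one triangle from each copy of $\partial\Delta_3$ to form $\widetilde Y$ with $\pi_1(\widetilde Y)=\pi_1(Y)$; show $\widetilde Y$ is \emph{aspherical}; observe that $H_2(\widetilde Y)\neq 0$ because only $O(1)$ triangles were removed while $H_2(Y)\neq 0$ grows (by \cite{AL}); conclude $\widetilde Y$ is a $K(\pi_1(Y),1)$ with nontrivial $H_2$, so by uniqueness of $K(G,1)$ homotopy type $\pi_1(Y)$ cannot be free. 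Asphericity removes the need to produce any specific non-spherical cycle, which is exactly the step your proposal leaves open. I would replace your search for a high-genus subcomplex with the \cite{CF} asphericity argument.
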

Naturally Conjecture \ref{conjecturedthreshold} together with Theorem \ref{almostaspherical} would imply that $\pi_1(X(n, p))$ is free if $p < \frac{\sqrt{\gamma_2}}{\sqrt{n}} \approx \frac{1.567}{\sqrt{n}}$ and is not free if $p > \frac{\sqrt{c_2}}{\sqrt{n}} \approx \frac{1.660}{\sqrt{n}}$. It seems a conjecture for what the true threshold is between these two bounds would basically be just a guess. It could even be that there is no sharp threshold for $\pi_1(X)$ to go from free to nonfree.

Finally, the sharp threshold for emergence of $d$th homology in $Y_d(n, p)$ is apparently closely tied to the torsion burst in the random model. The torsion burst is a phenomenon observe experimentally in \cite{KLNP}, but as of now no proof exists to explain it. In the experiments right before the first cycle in $d$th homology, other than a copy of the $(d + 1)$-simplex boundary, appears, a huge torsion group appears in the $(d - 1)$st homology group. This torsion group vanishes soon after. A few experiments were conducted in \cite{KLNP} to search for torsion in $X(n, p)$, but none was found. However based on what we've proved here it seems plausible at least that the reason none was observed in experiments was because the experiments were on too few vertices. Perhaps when $n$ is large enough, one would observe a torsion burst in $X(n, p)$. 

As $Y_d(n, p)$ and $X(n, p)$ are special cases of the \emph{multiparameter model} first defined in \cite{CostaFarberMultiparameter}, it would be reasonable to examine one-sided sharp (or two-sided sharp) thresholds for $d$-collapsibility and nonvanishing of $d$th homology in the multiparameter model.

\section*{Acknowledgements}
The author thanks Anton Dochtermann for questions motivating this direction of research and Florian Frick and Matthew Kahle for helpful comments on an earlier draft.
\bibliography{ResearchBibliography}
\bibliographystyle{amsplain}

\end{document}